\newtheorem{theorem}{Theorem}
\newtheorem{lemma}[theorem]{Lemma}
\newtheorem{proposition}[theorem]{Proposition}
\newtheorem{remark}[theorem]{Remark}
\newtheorem{definition}[theorem]{Definition}
\newtheorem{example}[theorem]{Example}
\numberwithin{theorem}{section}
\numberwithin{equation}{section}
\newcommand{\mR}{\mathbb{R}}
\newcommand{\mS}{\mathbb{S}}
\newcommand{\mC}{\mathbb{C}}
\newcommand{\mD}{\mathbb{W}}
\newcommand{\mM}{{\mathbb M}}
\newcommand{\gln}{{\mathbb{G}\mathbb{L}_n}}
\newcommand{\mU}{{\mathbb U}}
\newcommand{\mH}{{\mathbb H}}
\newcommand{\mP}{{\mathbb P}}
\newcommand{\mA}{{\mathbb A}}
\newcommand{\scriptU}{\mD\mU_n}
\newcommand{\scriptUA}{\mA\mU_n}
\newcommand{\diag}{{\text{\rm diag}}}
\newcommand{\tr}{{\text{\rm tr}}}
\newcommand{\logrank}{{\text{\rm log-rank}}}
\let\odlangle\angle  
\renewcommand{\angle}{\odlangle \,}
\begin{document}

%%%%%%%%%%%%%%%%%%%%%%%%
%Setting up first page %
%%%%%%%%%%%%%%%%%%%%%%%%

\title{Finsler geometries on strictly accretive matrices}
\author{}
\date{\vspace{-40pt}
Axel \textsc{Ringh}\footnotemark[1] \; and \ Li \textsc{Qiu}\footnotemark[1]
}

\maketitle

\begin{abstract}
In this work we study the set of strictly accretive matrices, that is, the set of matrices with positive definite Hermitian part, and show that the set can be interpreted as a smooth manifold.
Using the recently proposed symmetric polar decomposition for sectorial matrices, we show that this manifold is diffeomorphic to a direct product of the manifold of (Hermitian) positive definite matrices and the manifold of strictly accretive unitary matrices. Utilizing this decomposition, we introduce a family of Finsler metrics on the manifold and charaterize their geodesics and geodesic distance. Finally, we apply the geodesic distance to a matrix approximation problem, and also give some comments on the relation between the introduced geometry and the geometric mean for strictly accretive matrices as defined
by S.~Drury in [S.~Drury, Linear Multilinear Algebra. 2015 63(2):296--301].
%by S.~Drury in \cite{drury2015principal}.
\end{abstract}

{\bf \small Key words:} {\small Accretive matrices; matrix manifolds; Finsler geometry; numerical range; geometric mean}

\renewcommand{\thefootnote}{\fnsymbol{footnote}}
\footnotetext[1]{Department of Electronic and Computer Engineering,\ The Hong Kong University of Science and Technology,\ Clear Water Bay, Kowloon, Hong Kong, China.
Email: \texttt{eeringh@ust.hk} (A.~Ringh), \texttt{eeqiu@ust.hk} (L.~Qiu)}
\footnotetext[2]{This work was supported by the Knut and Alice Wallenberg foundation, Stockholm, Sweden, under grant KAW 2018.0349, and the Hong Kong Research Grants Council, Hong Kong, China, under project GRF 16200619.}

\renewcommand{\thefootnote}{\arabic{footnote}}
\setcounter{footnote}{0}

%%%%%%%%%%%%%%%
%The document %
%%%%%%%%%%%%%%%

\section{Introduction}
Given a complex number $z \in \mC$ we can write it in its Cartesian form $z = a +ib$, where $a = \Re(z)$ is the real part and $b = \Im(z)$ is the imaginary part, or we can write it in its polar form as $z = r e^{i\theta}$, where $r = |z|$ is the magnitude and $\theta = \angle z$ is the phase. The standard metric on $\mC$ defines the (absolute) distance between $z_1$ and $z_2$ as $|z_1 - z_2| = \sqrt{\Re(z_1 - z_2)^2 + \Im(z_1 - z_2)^2}$ which is efficiently computed using the Cartesian form as $\sqrt{(a_1 - a_2)^2 + (b_1 - b_2)^2}$. However, sometimes a logarithmic (relative) distance between the numbers contains information that is more relevant for the problem at hand. One such distance is given by  $\sqrt{\log(r_1/r_2)^2 + [(\theta_1 - \theta_2) \mid \text{mod } 2\pi]^2}$, and in this distance measure the point $1$ is as close to $10e^{i\theta}$ as it is to $0.1e^{-i\theta}$. 
This type of distances have wide application in engineering problems, e.g., as demonstrated in the use of  Bode plots and Nichols charts in control theory \cite{astrom2008feedback}.
Moreover, this type of logarithmic metric has been generalized to (Hermitian) positive definite matrices, with plenty of applications, for example in computing geometric means between such matrices \cite{moakher2005differential}, \cite[Chp.~6]{bhatia2007positive}, \cite[Chp.~XII]{lang1999fundamentals}. This generalization can done by identifying the set of positive matrices as a smooth manifold and introducing a Riemannian or Finsler metric on it. Here, we follow a similar path and extend this type of logarithmic metrics to so called strictly accretive matrices.
More specifically,
the outline of the paper is as follows: in Section~\ref{sec:background} we review relevant background material and set up the notation used in the paper. Section~\ref{sec:A_smooth_manifold} is devoted to showing that the set of strictly accretive matrices can be interpreted as a smooth manifold, and that this manifold is diffeomorphic to a direct product of the smooth manifold of positive definite matrices and the smooth manifold of strictly accretive unitary matrices.
The latter is done using the newly introduced symmetric polar decomposition for sectorial matrices \cite{wang2019phases}.
In Section~\ref{sec:riemann_A} we introduce a family of Finsler metrics on the manifold, by means of the decomposition from the previous section and so called (Minkowskian) product functions \cite{okada1982minkowskian}. In particular,
this allows us to characterize the
corresponding geodesics and compute the geodesic distance. Finally, in Section~\ref{sec:applications} we given an application of the metric to a matrix approximation problem and also give some comments on the relation between the geodesic midpoint and the geometric mean between strictly accretive matrices as introduced in \cite{drury2015principal}.

\section{Background and notation}\label{sec:background}
In the following section we introduce some background material needed for the rest of the paper. At the same time, this section is also be used to set up the notation used throughout.
To this end, let $\mM_n$ denote the set of $n \times n$ matrices over the filed $\mC$ of complex numbers.
For $A \in \mM_n$, let $A^*$ denotes its complex conjugate transpose, let $H(A) := \tfrac{1}{2} (A + A^*)$ denote its Hermitian part, and let $S(A) := \tfrac{1}{2} (A - A^*)$ denote its skew-Hermitian part.%
\footnote{Note that this is equivalent to the Toeplitz decomposition since if $A = \Re(A) + i \Im(A)$, then $\Re(A) = H(A)$ and $\Im(A) = \tfrac{1}{i} S(A)$, see \cite[p.~7]{horn2013matrix}.}
Moreover, by $I$ we denote the identity matrix, and for $A \in \mM_n$ by $\lambda(A)$ we denote its spectrum, i.e., $\lambda(A) := \{ \lambda \in \mC \mid \det(\lambda I - A) = 0 \}$, and by $\sigma(A)$ we denote it singular values, i.e., $\sigma(A) = \sqrt{\lambda(A^*A)}$.

The following is a number of different sets of matrices that will be used throughout: 
$\gln$ denotes the set of invertible matrices,
$\mU_n$ denotes the set of unitary matrices,
$\mH_n$ denotes the set of Hermitian matrices,
$\mP_n$ denotes the set of positive definite matrices, i.e., $A \in \mH_n$ s.t. $\lambda(A) \subset \mR_+ \setminus \{ 0 \}$,
$\mS_n$ denotes the set of skew-Hermitian matrices,
and
$\mA_n$ denotes the set of strictly accretive matrices, i.e., $A \in \mA_n$ if and only if $H(A) \in \mP_n$.%
\footnote{The naming used here is the same as in \cite[p.~281]{kato1995perturbation}, in contrast to \cite{ballantine1975accretive}.}

Two matrices $A,B \in \mM_n$ are said to be congruent if there exists a matrix $C \in \gln$ such that $A = C^*BC$.
For matrices $A,B \in \mM_n$ we define the inner product $\langle A, B \rangle := \tr(A^*B)$, which gives the Frobenius norm $\| A \| := \sqrt{\langle A, A \rangle} = \sqrt{\sum_{j=1}^n \sigma_j(A)^2}$.
By $\| \cdot \|_{\text{sp}}$ we denote the spectral norm, i.e., $\| A \|_{\text{sp}} = \sup_{x \in \mC^n \setminus \{0\}} \| Ax \|_2/ \| x \|_2 = \sigma_{\max}(A)$, the larges singular value of $A$. Next, a function $\Phi : \mR^n \to \mR$ is called a symmetric gauge function if for all $x,y \in \mR^n$ and all $\beta \in \mR$ i)~$\Phi(x) > 0$ if $x \neq 0$, ii) $\Phi(\beta x) = |\beta| \Phi(x)$, iii) $\Phi(x + y) \leq \Phi(x) + \Phi(y)$, and iv) $\Phi(x) = \Phi(\tilde{x})$ for all $\tilde{x} = [\pm x_{\alpha(i)}]_{i = 1}^n$ where $\alpha$ is any permutation of $\{ 1, \ldots, n \}$ \cite{mirsky1960symmetric}, \cite[Sec.~3.I.1]{marshall2011inequalities}. For any unitary invariant norm, i.e., norms $|\| \cdot \||$ such that $|\| U A V \|| = |\| A \||$ for all $A \in \mM_n$ and all $U,V \in \mU_n$, there exists a symmetric gauge function $\Phi$ such that $|\| A \|| = \Phi(\sigma(A))$  \cite[Thm~IV.2.1 ]{bhatia1997matrix}, \cite[Thm.~10.A.1]{marshall2011inequalities}. For this reason we will henceforth denote such norms $\|  \cdot \|_{\Phi}$. Moreover, we will call a symmetric gauge function, and the corresponding norm, smooth if it is smooth outside of the origin, cf.~\cite[Thm.~8.5]{lewis1996group}.

For a vector $x \in \mR^n$, by $x^{\downarrow}$ we denote the vector obtained by sorting the elements in $x$ in a nonincreasing order. More precisely, $x^{\downarrow}$ is obtained by permuting the elements of $x$ such that $x^{\downarrow} = [x_k^\downarrow]_{k=1}^n$  where $x_1^\downarrow \geq x_2^\downarrow \geq \ldots \geq x_n^{\downarrow}$.
For two vectors $x,y \in \mR^n$, we say that $x$ is submajorized (weakly submajorized) by $y$ if $\sum_{k = 1}^{\ell} x_k^\downarrow \leq \sum_{k = 1}^{\ell} y_k^\downarrow$ for $\ell = 1, \ldots, n-1$ and  $\sum_{k = 1}^{n} x_k^\downarrow = (\leq) \sum_{k = 1}^{n} y_k^\downarrow$ \cite[p.~12]{marshall2011inequalities}. Submajorization (weak submajorization) is a preorder on $\mR^n$, and we write $x \prec (\prec_w) y$. On the equivalence classes of vectors sorted in nonincreasing order it is a partial ordering  \cite[p.~19]{marshall2011inequalities}.

\subsection{Sectorial matrices and the phases of a matrix}

Given a matrix $A \in \mM_n$, we define the numerical range (field of values) as
\[
W(A) := \big\{ z \in \mC \mid z = x^*Ax, \; x \in \mC^n \text{ and } \| x \|^2 = x^*x = 1 \big\}.
\]
Using the numerical range, we can define the set of so called \emph{sectorial matrices} as 
\[
\mD_n := \{ A \in \mM_n \mid 0 \not \in W(A) \}.
\]
The name comes from the fact that the numerical range of a matrix $A \in \mD_n$ is contained in a sector of opening angle less than $\pi$. The latter can be seen from the Toeplitz-Hausdorff theorem, which states that for any matrix $A \in \mM_n$, $W(A)$ is a convex set \cite[Thm.~4.1]{zhang2011matrix}, \cite[Thm.~1.1-2]{gustafson1997numerical}. Recently, sectorial matrices have received considerable attention in the literature, see, e.g., \cite{ballantine1975accretive, mathias1992matrices, drury2014singular, li2014determinantal, zhang2015matrix, wang2019phases, chen2019phase}.

Sectorial matrices have several interesting properties. In particular,
if $A$ is sectorial it is congruent to a unitary diagonal matrix $D$, i.e.,
$
A = T^*DT
$
for some $T \in \gln$ \cite{horn1959eigenvalues, deprima1974range, furtado2001spectral, johnson2001generalization, horn2006canonical}. Although the decomposition is not unique, the elements in $D$ are unique up to permutation, and any such decomposition is called a \emph{sectorial decoposition} \cite{zhang2015matrix}. Using this decomposition, we define the phases of $A$, denoted $\phi_1(A), \phi_2(A), \ldots, \phi_n(A)$, as the phases of the eigenvalues of $D$ \cite{furtado2001spectral, wang2019phases, chen2019phase};%
\footnote{In \cite{furtado2001spectral}, these were called canonical angles.}
by convention we defined them to belong the an interval of length strictly less than $\pi$.
With this definition we have, e.g., that
$A \in \mA_n$ if and only if $A \in \mD_n$ and $H(A) \in \mP_n$, which is true if and only if $(\phi_1(A), \ldots, \phi_n(A)) \subset (-\pi/2, \pi/2)$.
Note that the phases of a sectorial matrix $A$ is different from the angles of the eigenvalues, i.e., in general $\phi(A) \neq \varphi(A)$ where $\varphi(A) := \angle \lambda(A)$.
More precisely, equality holds for normal matrices.
The phases have a number of desirable properties that the angles of the eigenvalues do not, see \cite{wang2019phases}.

Another decomposition of sectorial matrices, which will in fact be central to this work, is the so called \emph{symmetric polar decomposition} \cite[Thm.~3.1]{wang2019phases}: for $A \in \mD_n$ there is a unique decomposition given by 
\[
A = P U P,
\]
where $P \in \mP_n$ and $U \in \scriptU :=  \mU_n \cap \mD_n$. The latter is the set of sectorial unitary matrices. The phases of $A$ are given by the phases of $U$, which is in fact the phases of the eigenvalues of $U$. Therefore, we have that $A \in \mA_n$ if and only if it has a symmetric polar decomposition such that $U \in \scriptUA := \{ U \in \scriptU \mid H(U) \in \mP_n \}$, i.e., the set of strictly accretive unitary matrices.

\subsection{Riemannian and Finsler manifolds}
Smooth manifolds are important mathematical objects which show up in such diverse fields as theoretical physics \cite{oneill1983semi}, robotics \cite{murray1994mathematical}, and statistics and information theory \cite{amari2000methods}. Intuitively, they can be thought of as spaces that locally look like the Euclidean space, and on these spaces one can introduce geometric concepts such as curves and metrics. 
In particular, all smooth manifolds admit a so called Riemannian metric \cite[Thm.~1.4.1]{jost2008riemannian}, \cite[Prop.~13.3]{lee2013introduction}, and 
Riemannian geometry is a well-studied subjects, see, e.g., one of the monographs \cite{oneill1983semi, lang1999fundamentals, jost2008riemannian, lee2013introduction, lee2018introduction}.
An relaxation of Riemannian geometry leads to so called Finsler geometry \cite{chern1996finsler}; loosely expressed it can be interpreted as chaining the tangent space from a Hilbert space to a Banach space. For an introduction to Finsler geometry, see, e.g., \cite{bao2000introduction, shimada2000finsler, cheng2012finsler}. 

More specifically, given a smooth manifold $\mathcal{M}$, for $x \in \mathcal{M}$ we denote the tangent space by $T_x\mathcal{M}$ and the tangent bundle by $T\mathcal{M} := \cup_{x \in \mathcal{M}} \{x\} \times T_x\mathcal{M}$.
A Riemannian metric is induced by an inner product on the tangent space, $\langle \cdot, \cdot \rangle_x : T_x\mathcal{M} \times T_x\mathcal{M} \to \mR$, that varies smoothly with the base point $x$. Using this inner product, one defines the norm $\sqrt{\langle \cdot, \cdot \rangle_x}$, which in fact defines a smooth function on the slit tangent bundle $T\mathcal{M} \setminus \cup_{x \in \mathcal{M}} (x, 0)$.
In this work we consider Finsler structures on smooth (matrix) manifolds, but we will limit the scope to Finsler structures $F : T\mathcal{M} \to \mR_+$, $ (x,X)  \mapsto \| X \|_x$, where $\|  \cdot \|_x$ is a norm on $T_x\mathcal{X}$ which is not necessarily induced by an inner product, and such that $F$ is smooth on the slit tangent bundle $T\mathcal{M} \setminus \cup_{x \in \mathcal{M}} (x, 0)$.
Given a piece-wise smooth curve $\gamma : [0,1] \to \mathcal{M}$, the arc length on the manifold is defined using this Finsler structure. More precisely, it is defined as
\begin{equation*}
\mathcal{L}(\gamma) := \int_0^1 F(\gamma(t), \dot{\gamma}(t)) dt,
\end{equation*}
where $\dot{\gamma}(t)$ is the derivative of $\gamma$ with respect to $t$. Using arc length, the geodesic distance between two points $x,y \in \mathcal{M}$ is defined as
\begin{equation*}
\delta (x, y) := \inf_{\gamma}  \mathcal{L}(\gamma) \; : \; \gamma \text{ is a piece-wise smooth curve such that } \gamma(0)=x, \gamma(1)=y,
\end{equation*}
and a minimizing curve (if one exists) is called a geodesic.
A final notion we need is that of diffeomorphic manifolds. More precisely, two smooth manifolds $\mathcal{M}$ and $\mathcal{N}$ are said to be diffeomorphic if there exists a diffeomorphism $f : \mathcal{M} \to \mathcal{N}$, i.e., a function $f$ which is a smooth bijection with a smooth inverse. In this case we write $\mathcal{M} \cong \mathcal{N}$.

Next, we summarize some results regarding two matrix manifolds, namely $\mP_n$ and $\mU_n$, together with specific Finsler structures. These will be needed later.

\subsubsection{A family of Finsler metrics on $\mP_n$ and their geodesics}
Riemannian and Finsler geometry on $\mP_n$ is a well-studied subject, and we refer the reader to, e.g., \cite{moakher2005differential}, \cite[Chp.~6]{bhatia2007positive} or \cite[Chp.~XII]{lang1999fundamentals}. Here, we summarize some of the results we will need for later. To this end, note that the tangent space at $P \in \mP_n$ is $\mH_n$, and given $P \in \mP_n$, $X,Y \in \mH_n$ we can introduce the inner product on the tangent space as 
\begin{equation}\label{eq:inner_product_P}
\langle X, Y \rangle_{P} = \tr((P^{-1/2} X P^{-1/2})^*(P^{-1/2} Y P^{-1/2})),
\end{equation}
with corresponding norm $F_{\mP_n}(P, X) = \| X \|_P
= \sqrt{\sum_{j=1}^n \sigma_j(P^{-1/2}XP^{-1/2})^2}$.
The geodesic between $P,Q \in \mP_n$ in the induced Riemannian metric is given by
\begin{equation}\label{eq:geodesic_on_P}
\gamma_{\mP_n}(t) = P^{1/2} (P^{-1/2} Q P^{-1/2})^t P^{1/2} = Pe^{t \log(P^{-1}Q)}
\end{equation}
and the length of the curve, i.e., the Riemannian distance between $P$ and $Q$, is given by 
\[
\delta_{\mP_n}(P, Q) = \| \log (P^{-1/2} Q P^{-1/2}) \|.
\]
Interestingly, if the norm $\| \cdot \|_P$ on $T_P\mP_n$ is changed to any other unitary invariant matrix norm
\[
\| \cdot \|_{\Phi, P} = \Phi(\sigma(P^{-1/2} \cdot P^{-1/2}))
\]
the expressions for a geodesic between two matrices remains unchanged and the corresponding distance is given by $\delta_{\mP_n}^{\Phi}(P, Q) = \| \log (P^{-1/2} Q P^{-1/2}) \|_{\Phi}$ \cite[Sec.~6.4]{bhatia2007positive}. However, the geodesic \eqref{eq:geodesic_on_P} might no longer be the unique shortest curve \cite[p.~223]{bhatia2007positive}.

An alternative expression for the geodesic \eqref{eq:geodesic_on_P} is given by the following proposition.

\begin{proposition}\label{prop:new_form_curve_P}
Let $\| \cdot \|_{\Phi}$ be any smooth unitarily invariant norm and consider the Finsler structure given by $F_{\mP_n}^{\Phi}: T \mP_n \to \mR_+$, $F_{\mP_n}^{\Phi} : (P, X) \mapsto \| P^{-1/2} X P^{-1/2} \|_{\Phi}$.
For $P,Q \in \mP_n$, a geodesic between them can be written as $\gamma(t) = S \Lambda^t S^*$ where  $P = SS^*$, $Q = S \Lambda S^*$ is a simultaneous diagonalization by congruence of $P$ and $Q$, i.e., $S \in \gln$ and $\Lambda$ is diagonal with positive elements on the diagonal. Moreover, the geodesic distance is
\begin{equation}\label{eq:dist_P}
\delta_{\mP_n}^{\Phi}(P, Q) = \| \log (P^{-1/2} Q P^{-1/2}) \|_{\Phi} = \Phi\Big( \log \big( \lambda(P^{-1} Q) \big) \Big) = \| \log (\Lambda) \|_{\Phi}.
\end{equation}
\end{proposition}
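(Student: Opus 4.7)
The plan is to reduce the statement to the already-recalled fact that the curve $\eta(t) := P^{1/2}(P^{-1/2}QP^{-1/2})^t P^{1/2}$ is a geodesic joining $P$ to $Q$ with length $\|\log(P^{-1/2}QP^{-1/2})\|_{\Phi}$. So the task is really just to rewrite this canonical geodesic in the congruence-diagonalization form and to massage the distance into the two alternative expressions. The existence of a simultaneous diagonalization by congruence $P = SS^*$, $Q = S\Lambda S^*$ (with $S \in \gln$ and $\Lambda$ positive diagonal) is standard: $P^{-1}Q$ is similar to the positive definite matrix $P^{-1/2}QP^{-1/2}$, hence is diagonalizable with positive spectrum, and one recovers $S$ from $P^{1/2}$ and a unitary diagonalizer of $P^{-1/2}QP^{-1/2}$.

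First, I would set $V := P^{-1/2}S$ and observe $VV^* = P^{-1/2}(SS^*)P^{-1/2} = I$, so $V \in \mU_n$. Therefore
\[
P^{-1/2}QP^{-1/2} \;=\; P^{-1/2}(S\Lambda S^*)P^{-1/2} \;=\; V\Lambda V^*
\]
is a spectral decomposition. Substituting this into $\eta(t)$ and using $P^{1/2}V = S$ collapses it to
\[
\eta(t) \;=\; P^{1/2} V \Lambda^{t} V^{*} P^{1/2} \;=\; S\Lambda^{t}S^{*} \;=\; \gamma(t),
\]
so the proposed curve coincides with the known geodesic and automatically satisfies $\gamma(0) = P$, $\gamma(1) = Q$.

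For the distance, unitary invariance of $\|\cdot\|_{\Phi}$ together with $\log(V\Lambda V^*) = V\log(\Lambda)V^*$ yields
\[
\|\log(P^{-1/2}QP^{-1/2})\|_{\Phi} \;=\; \|V\log(\Lambda)V^*\|_{\Phi} \;=\; \|\log(\Lambda)\|_{\Phi}.
\]
For the remaining equality, the similarity $P^{-1}Q = P^{-1/2}(P^{-1/2}QP^{-1/2})P^{1/2}$ shows that $\lambda(P^{-1}Q)$ coincides with the multiset of diagonal entries of $\Lambda$. Since $\log(\Lambda)$ is real diagonal, its singular values are $|\log \Lambda_{ii}|$, and the sign-invariance property of the symmetric gauge function $\Phi$ (property~iv) gives
\[
\|\log(\Lambda)\|_{\Phi} \;=\; \Phi(|\log\lambda(P^{-1}Q)|) \;=\; \Phi(\log\lambda(P^{-1}Q)),
\]
closing the chain of three equalities in \eqref{eq:dist_P}.

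There is no real obstacle here beyond bookkeeping; the argument is entirely an unwinding of the congruence decomposition into a unitary diagonalization of $P^{-1/2}QP^{-1/2}$, followed by unitary invariance. The only mild subtlety worth flagging is the use of sign-invariance of $\Phi$ to identify $\Phi$ applied to $\log\lambda(P^{-1}Q)$ (a possibly signed real vector) with the $\|\cdot\|_{\Phi}$-norm of the diagonal matrix $\log(\Lambda)$ whose singular values are the absolute values of those entries.
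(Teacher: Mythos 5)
Your proposal is correct and follows essentially the same route as the paper: take the known geodesic $P^{1/2}(P^{-1/2}QP^{-1/2})^{t}P^{1/2}$, rewrite it via the simultaneous congruence diagonalization $P=SS^*$, $Q=S\Lambda S^*$, and obtain the distance identities from unitary invariance together with the sign-invariance of the symmetric gauge function. The only cosmetic difference is that you exhibit the unitary $V=P^{-1/2}S$ directly where the paper cites the standard simultaneous-diagonalization theorem and manipulates the equivalent form $Pe^{t\log(P^{-1}Q)}$; both arguments are sound.
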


\begin{proof}
To show the second equality in \eqref{eq:dist_P}, note that
\begin{align*}
\| \log (P^{-1/2} Q P^{-1/2}) \|_{\Phi} & = \Phi\Big( \sigma \big( \log(P^{-1/2} Q P^{-1/2}) \big) \Big) = \Phi\Big( |\lambda \big( \log(P^{-1/2} Q P^{-1/2}) \big)| \Big) \\
& = \Phi\Big( \lambda \big( \log(P^{-1/2} Q P^{-1/2}) \big) \Big) = \Phi\Big( \log \big(  \lambda (P^{-1/2} Q P^{-1/2}) \big) \Big) \\
& = \Phi\Big( \log \big(  \lambda (P^{-1} Q) \big) \Big)
\end{align*}
where the second equality comes from that the singular values of a Hermitian matrix, i.e., the matrix $\log(P^{-1/2} Q P^{-1/2})$, are the absolute values of the eigenvalues, the third equality follows since the symmetric gauge function is invariant under sign changes, the forth can be seen by using a unitary diagonalization of $P^{-1/2} Q P^{-1/2}$, and the fifth equality comes from that the spectrum is invariant under similarity.
Next, since both $P, Q \in \mP_n$, by \cite[Thm.~7.6.4]{horn2013matrix} we can simultaneously diagonalize $P$ and $Q$ by congruence, i.e., there exists an $S \in \gln$ such that $P = SS^*$ and $Q = S\Lambda S^*$, where $\Lambda=\diag(\lambda_1, \ldots, \lambda_n)$ and where $\lambda_1, \ldots, \lambda_n$ are all strictly larger than $0$. In fact, $\lambda_1, \ldots, \lambda_n$ are the eigenvalues of $P^{-1}Q$, which means that 
$
\log \big( \lambda(P^{-1} Q) \big) = \log (\lambda(\Lambda)) = \log (\Lambda),
$
which in turn gives the last equality in \eqref{eq:dist_P}.
Finally, this also gives that
$
\gamma(t) = Pe^{t \log(P^{-1}Q)} = SS^*  S^{-*} e^{t\log (\Lambda)} S^*= S \Lambda^t S^*.
$
\end{proof}

\subsubsection{A family of Finsler metrics on $\mU_n$ and their geodesics}
The set of unitary matrices is a Lie group, and results related to Riemannian and Finsler geometry on $\mU_n$ can be found in, e.g., \cite{andruchow2005short, andruchow2008geometry, andruchow2010finsler, antezana2014optimal, antezana2020minimal}. 
Again, we here summarize some of the results that we will need for later. To this end, note that the tangent space at $U \in \mU_n$ is $\mS_n$ and given $U \in \mU_n$, $X,Y \in \mS_n$  we can introduce the inner product on the tangent space as 
\begin{equation}\label{eq:inner_product_U}
\langle X,Y \rangle_{U} = \tr(X^*Y),
\end{equation}
with corresponding induced norm $F_{\mU_n}(U, X)  = \| X \|_U = \sqrt{\langle X, X \rangle_U}$.
The induced Riemannian metric have shortest curves between $U,V \in \mU_n$ given by
\begin{equation}\label{eq:geodesic_on_U}
\gamma_{\mU_n}(t) = Ue^{itZ},
\end{equation}
where $V = Ue^{iZ}$, and where $Z \in \mH_n$ is such that $\| Z \|_{\text{sp}} \leq \pi$. 
Moreover, the geodesic distance is
\begin{equation}\label{eq:geodesic_distance_U}
\delta_{\mU_n}(U, V) =  \| Z \|,
\end{equation}
and the geodesic is unique if $\| Z \|_{\text{sp}} < \pi$. Similarly to the results for the smooth manifold $\mP_n$, if the norm $\| \cdot \|_U$ on $T_U \mU_n$ is changed to any other unitary invariant matrix norm $\| \cdot \|_{\Phi, U}$
the expressions for a geodesic in \eqref{eq:geodesic_on_U} is unchanged and
the expression for the geodesic distance \eqref{eq:geodesic_distance_U} is only changed to using the corresponding norm $\| \cdot \|_{\Phi}$ \cite{andruchow2010finsler, antezana2014optimal}.
However, even if $\| Z \|_{\text{sp}} < \pi$ the geodesic \eqref{eq:geodesic_on_U} might not be unique in this case \cite[Sec.~3.2]{antezana2014optimal}.

\section{The smooth manifold of strictly accretive matrices}\label{sec:A_smooth_manifold}
In this section we prove that $\mA_n$ is a smooth manifold diffemorphic to $\mP_n \times \scriptUA$.
The latter is fundamental for the introduction of the Finsler structures in the following section.
For improved readability, some of the technical results of this section are deferred to Appendix~\ref{app:technical_results_one}. To this end, we start by proving the following.

\begin{theorem}\label{thm:A_smooth_manifold}
$\mA_n$ is a connected smooth manifold, and at a point $A \in \mA_n$ the tangent space is $T_A\mA_n = \mM_n$.
\end{theorem}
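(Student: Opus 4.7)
The plan is to show that $\mA_n$ is an open subset of $\mM_n$, from which both the smooth manifold structure and the identification of the tangent space follow essentially for free, and then establish connectedness via convexity.

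First I would observe that the map $H : \mM_n \to \mH_n$, $A \mapsto \tfrac{1}{2}(A + A^*)$, is a continuous (in fact $\mR$-linear) map. Since $\mP_n$ is an open subset of $\mH_n$ (positive definiteness is preserved under small Hermitian perturbations, which follows from continuity of eigenvalues or from the fact that $\mP_n = \{M \in \mH_n \mid x^*Mx > 0 \text{ for all } \|x\|=1\}$ is the preimage of $\mR_+\setminus\{0\}$ under a family of continuous maps restricted to the compact unit sphere), the preimage $\mA_n = H^{-1}(\mP_n)$ is open in $\mM_n$.

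Next, since $\mM_n$ is a finite-dimensional complex vector space (hence a smooth real manifold of dimension $2n^2$), any open subset inherits a canonical smooth manifold structure from the ambient linear space, and at every point $A$ the tangent space is canonically identified with $\mM_n$ itself. This gives both conclusions of the theorem except for connectedness.

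Finally, for connectedness I would show that $\mA_n$ is convex, which is stronger and immediately implies path-connectedness. Given $A, B \in \mA_n$ and $t \in [0,1]$, linearity of $H$ yields $H(tA + (1-t)B) = tH(A) + (1-t)H(B)$, and since $\mP_n$ is itself convex (a convex combination of two positive definite matrices is positive definite), the convex combination of $A$ and $B$ lies in $\mA_n$. Hence $\mA_n$ is convex, path-connected, and therefore connected.

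There is no real obstacle here; the result is essentially formal once one recognizes that $\mA_n$ is cut out of $\mM_n$ by an open condition expressed through a continuous linear map into $\mH_n$. The only mild subtlety is being explicit about why $\mP_n$ is open in $\mH_n$, but this is standard.
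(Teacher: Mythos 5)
Your proof is correct, and it is in places simpler than the paper's own argument, which follows the same overall skeleton (openness in the ambient matrix space gives the smooth structure and the tangent space identification, then connectedness is handled separately) but differs in the two key steps. For openness, the paper argues via the orthogonal decomposition $\mM_n = \mH_n \oplus \mS_n$ and the openness of $\mP_n \oplus \mS_n$, whereas you simply note that $\mA_n = H^{-1}(\mP_n)$ for the continuous $\mR$-linear map $H$, which is cleaner and gives openness directly in $\mM_n$. For connectedness, the paper constructs an explicit piecewise smooth path from an arbitrary $A \in \mA_n$ to $I$ using the sectorial decomposition $A = T^*DT$ (first shrinking the phases via $T^*D^{1-2t}T$, then contracting $T^*T$ to $I$); your observation that $\mA_n$ is convex — since $H$ is linear and $\mP_n$ is a convex cone — is strictly stronger, more elementary, and avoids the sectorial decomposition altogether. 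The paper's path construction does have the side benefit of exhibiting a path through matrices whose phase and magnitude structure is controlled at every stage, which is in the spirit of the decomposition used throughout the paper, but for the purposes of this theorem your convexity argument suffices and is the shorter route.
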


\begin{proof}
This follows by Lemma~\ref{lem:open_A}, \ref{lem:connected}, and \ref{lem:tangent_space}, and applying \cite[Ex.~1.26]{lee2013introduction}.
\end{proof}

Next, we prove the following characterization of the manifold.

\begin{theorem}\label{thm:decomp_manifold}
$\mA_n \cong \mP_n \times \scriptUA$, where $\mP_n$ and $\scriptUA$ are embedded submanifolds.
\end{theorem}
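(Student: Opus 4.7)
The plan is to exhibit an explicit diffeomorphism $f \colon \mP_n \times \scriptUA \to \mA_n$ given by $f(P, U) := PUP$, and then use it to identify $\mP_n$ and $\scriptUA$ as embedded submanifolds of $\mA_n$. First I would check that $f$ maps into $\mA_n$: for $P \in \mP_n$ and $U \in \scriptUA$, one has $H(PUP) = P\,H(U)\,P$, which is positive definite by congruence. Smoothness of $f$ is immediate from smoothness of matrix multiplication, and bijectivity follows directly from the uniqueness of the symmetric polar decomposition of sectorial matrices \cite[Thm.~3.1]{wang2019phases}, combined with the already-noted characterization $A \in \mA_n \Leftrightarrow U \in \scriptUA$ of the unitary factor.

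The technical heart of the proof, and the main obstacle, is showing that $f^{-1}$ is smooth. I would accomplish this by writing $P$ and $U$ as explicit smooth functions of $A$. Starting from $A = PUP$ and using $A^{-*} = P^{-1} U P^{-1}$, a direct computation gives
\begin{equation*}
A A^{-*} = PUP \cdot P^{-1} U P^{-1} = P U^2 P^{-1},
\end{equation*}
so $AA^{-*}$ is similar to $U^2$ and has spectrum $\{e^{2i\phi_j(A)}\}$. Since $A \in \mA_n$ forces $\phi_j(A) \in (-\pi/2, \pi/2)$, this spectrum lies in $\mC \setminus (-\infty, 0]$, so the principal matrix square root $(AA^{-*})^{1/2}$ is well defined by holomorphic functional calculus and depends smoothly on $A$. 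Functional calculus commutes with similarity, and the principal square root of $U^2$ is $U$ itself (as the phases $2\phi_j(A)$ lie in $(-\pi,\pi)$), so $(AA^{-*})^{1/2} = PUP^{-1}$ and
\begin{equation*}
(AA^{-*})^{-1/2} A = P U^* P^{-1} \cdot PUP = P^2.
\end{equation*}
Applying the smooth positive definite square root on $\mP_n$ then recovers $P$ smoothly from $A$, and $U = P^{-1} A P^{-1}$ is smooth as well. Thus $f^{-1}$ is smooth and $f$ is a diffeomorphism.

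For the embedded submanifold claim, I would invoke uniqueness of the symmetric polar decomposition once more: the trivial decompositions $P = P^{1/2} \cdot I \cdot P^{1/2}$ and $U = I \cdot U \cdot I$ give $f^{-1}(P) = (P^{1/2}, I)$ for every $P \in \mP_n$ and $f^{-1}(U) = (I, U)$ for every $U \in \scriptUA$. Consequently $f^{-1}(\mP_n) = \mP_n \times \{I\}$ and $f^{-1}(\scriptUA) = \{I\} \times \scriptUA$, both of which are manifestly embedded submanifolds of the product $\mP_n \times \scriptUA$. Transporting back through the diffeomorphism $f$ then exhibits $\mP_n$ and $\scriptUA$ as embedded submanifolds of $\mA_n$, completing the proof.
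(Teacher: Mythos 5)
Your proof is correct, but it reaches the key step --- smoothness of the inverse map $A \mapsto (P,U)$ --- by a genuinely different route than the paper. The paper factors $A = H(A)^{1/2} K H(A)^{1/2}$ with $K = I + iH(A)^{-1/2}\tfrac{1}{i}S(A)H(A)^{-1/2}$ normal, and then applies smoothness of the (ordinary) polar decomposition twice (its Lemma on $\gln \cong \mU_n \times \mP_n$) to assemble the symmetric polar factors; no formula for $P$ or $U$ in terms of $A$ ever appears. You instead derive the closed-form expressions $P^2 = (AA^{-*})^{-1/2}A$ and $U = P^{-1}AP^{-1}$, using that $AA^{-*} = PU^2P^{-1}$ has spectrum $\{e^{2i\phi_j(A)}\}\subset \mC\setminus(-\infty,0]$, so the principal square root is defined, commutes with similarity, and satisfies $(U^2)^{1/2}=U$ because the eigenvalues of $U$ lie in the open right half-plane. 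What your route buys is brevity and an explicit formula for the factors (which would even reprove existence and uniqueness of the symmetric polar decomposition on $\mA_n$, not just its smoothness); what it costs is an appeal to slightly heavier machinery --- you should cite that the principal square root is a holomorphic primary matrix function on the open set of matrices with no eigenvalue on $(-\infty,0]$ and is the unique square root with spectrum in the open right half-plane (e.g.\ Higham, \emph{Functions of Matrices}, Thm.~1.29), whereas the paper only needs smoothness of the Hermitian square root on $\mP_n$ and of the polar decomposition. Two small housekeeping points: you implicitly use that $\scriptUA$ is a smooth manifold (it is open in $\mU_n$, the paper's Lemma~\ref{lem:script_U_manif}), which is needed for the product $\mP_n\times\scriptUA$ and hence your statement to make sense; and for the embedded-submanifold claim a citation such as \cite[Prop.~5.3]{lee2013introduction} for slices $\mP_n\times\{I\}$ and $\{I\}\times\scriptUA$ would replace ``manifestly.'' Note also that your diffeomorphism identifies $A$ with $(P,U)$ rather than the paper's $(P^2,U)$; since squaring is a diffeomorphism of $\mP_n$, this is immaterial, as the paper itself observes.
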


This theorem follows as a corollary to the following proposition.

\begin{proposition}\label{prop:smooth_varying_SPD}
For $A \in \mA_n$, let $A = PUP$ be the symmetric polar decomposition.
Then the mapping $A \mapsto (P^2, U)$ is a diffeomorphism between the smooth manifolds $\mA_n$ and $\mP_n \times \scriptUA$.
\end{proposition}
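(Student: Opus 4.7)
The approach is to study the inverse map $\Psi \colon \mP_n \times \scriptUA \to \mA_n$, $\Psi(R, U) := R^{1/2} U R^{1/2}$, establish it as a smooth bijection whose differential is an isomorphism at every point, and then invoke the inverse function theorem. Since the reparameterization $P \leftrightarrow R = P^2$ is a diffeomorphism of $\mP_n$ (its differential $\dot P \mapsto P \dot P + \dot P P$ is a Lyapunov operator on $\mH_n$ whose ``eigenvalues'' $\mu_i + \mu_j$ are strictly positive), I will work with the notationally cleaner $\tilde\Psi(P, U) := P U P$ whenever convenient.

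First, $\Psi$ takes values in $\mA_n$: for $(R, U) \in \mP_n \times \scriptUA$, one has $H(R^{1/2} U R^{1/2}) = R^{1/2} H(U) R^{1/2} \in \mP_n$, since $H(U) \in \mP_n$ by definition of $\scriptUA$ and congruence by $R^{1/2}$ preserves positive definiteness. Smoothness of $\Psi$ then follows from smoothness of multiplication and of the matrix square root. Bijectivity is essentially a restatement of the existence and uniqueness of the symmetric polar decomposition recalled in Section~\ref{sec:background}: given $A \in \mA_n$, the decomposition $A = P U P$ forces $U \in \scriptUA$ via $H(A) = P H(U) P \in \mP_n$, and $(P^2, U)$ is uniquely determined by $A$.

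The main step is to verify injectivity of $d\tilde\Psi_{(P, U)}$ on $\mH_n \times U\mS_n$. Computing,
\[
d\tilde\Psi_{(P, U)}(\dot P, \dot U) \;=\; \dot P \, U P + P \, \dot U \, P + P U \, \dot P,
\]
and setting this to zero, I multiply by $P^{-1}$ on both sides to obtain $\dot U = -(P^{-1} \dot P \, U + U \, \dot P \, P^{-1})$. Demanding $\dot U U^* \in \mS_n$ reduces, after a short calculation, to
\[
M + U M U^* \;=\; 0, \qquad M := P^{-1} \dot P + \dot P P^{-1} \in \mH_n .
\]
This is the crux: diagonalizing $U = V \Lambda V^*$ and using $U \in \scriptUA$ (so $\Re \lambda_j > 0$ for every eigenvalue), the entries of $V^* M V$ satisfy $(\lambda_i + \lambda_j)(V^* M V)_{ij} = 0$ with every coefficient nonzero, forcing $M = 0$. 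A second Lyapunov argument on $P \dot P + \dot P P = 0$ then gives $\dot P = 0$, and therefore $\dot U = 0$.

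Since $\dim_\mR(\mH_n \times \mS_n) = 2 n^2 = \dim_\mR T_A \mA_n$ by Theorem~\ref{thm:A_smooth_manifold}, injectivity of the differential is equivalent to invertibility, and the inverse function theorem promotes the smooth bijection $\Psi$ to a diffeomorphism; in particular $A \mapsto (P^2, U) = \Psi^{-1}(A)$ is smooth. The main obstacle is the anticommutation identity $U M U^* = -M$: it is precisely the strict accretivity of the unitary factor --- all eigenvalues in the open right half-plane --- that excludes nontrivial solutions and connects the tangent-space computation to the defining property of $\mA_n$.
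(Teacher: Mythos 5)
Your proposal is correct, but it proves the proposition by a genuinely different route than the paper. You work with the inverse map $(R,U) \mapsto R^{1/2} U R^{1/2}$ (equivalently $(P,U)\mapsto PUP$), establish that it is a smooth bijection onto $\mA_n$ using existence and uniqueness of the symmetric polar decomposition, and then show its differential is injective: from $\dot P U P + P\dot U P + PU\dot P = 0$ and $\dot U U^* \in \mS_n$ you arrive at $M + UMU^* = 0$ with $M = P^{-1}\dot P + \dot P P^{-1} \in \mH_n$, and strict accretivity of $U$ kills $M$, after which a Lyapunov argument kills $\dot P$ and hence $\dot U$; the inverse function theorem then upgrades the bijective local diffeomorphism to a global one. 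This checks out (one small computational remark: diagonalizing $U = V\Lambda V^*$ gives $(1+\lambda_i\bar\lambda_j)(V^*MV)_{ij}=0$ rather than $(\lambda_i+\lambda_j)(V^*MV)_{ij}=0$, but the two are equivalent since $1+\lambda_i\bar\lambda_j = \bar\lambda_j(\lambda_i+\lambda_j)$ and $\bar\lambda_j \neq 0$, and either coefficient is nonzero exactly because the phases of $U$ lie in $(-\pi/2,\pi/2)$). The paper instead proves smoothness of the forward map $A \mapsto (P,U)$ constructively: it writes $A = H(A)^{1/2} K H(A)^{1/2}$ with $K$ normal, applies two successive polar decompositions (whose smoothness is Lemma~\ref{lem:polar_decomp_smooth}) and commutation of the factors of a normal matrix, and identifies $P$ and $U$ by uniqueness of the symmetric polar decomposition. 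The trade-off: the paper's argument yields an explicit composition of smooth maps producing $(P,U)$ from $A$, whereas your inverse-function-theorem argument is shorter and isolates the infinitesimal reason the decomposition is rigid (no nonzero Hermitian solution of $M + UMU^* = 0$ when $\lambda(U)$ lies in the open right half-plane), at the cost of being non-constructive about the smooth dependence. Also be careful with notation if this were merged into the paper: $\Psi$ is already reserved for the product function, and the tangent space of $\scriptUA$ at $U$ embedded in $\mM_n$ is $U\mS_n$, consistent with your use of $\dot U U^* \in \mS_n$.
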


\begin{proof}
Since $\mP_n$ and $\scriptUA$ are smooth manifolds (see \cite[Chp.~6]{bhatia2007positive} and Lemma~\ref{lem:script_U_manif}, respectively), $\mP_n \times \scriptUA$ is also a smooth manifold \cite[Ex.~1.34]{lee2013introduction}. 
Next, note that the matrix square root is a diffeomorphism of $\mP_n$ to itself, with the matrix square as the inverse, cf.~\cite[Thm.~7.2.6]{horn2013matrix}. Therefore, it suffices to show that the mapping $A \mapsto (P, U)$ is a diffeomorphism between $\mA_n$ and $\mP_n \times \scriptUA$.
To this end, first observe that the latter 
is a bijection due to
the existence and uniqueness of a symmetric polar decomposition \cite[Thm.~3.1]{wang2019phases}.
Moreover, that the inverse is smooth follows since the components in $A$ are polynomial in the components in $P$ and $U$. % and hence smooth.

To show that $P$ and $U$ are smooth in $A$, 
we note that since $A$ is strictly accretive, $H(A) \succ 0$. This means that we can write
\begin{align*}
A & = H(A) + i (\tfrac{1}{i}S(A)) = H(A)^{1/2}\big(I + i H(A)^{-1/2}\tfrac{1}{i}S(A)H(A)^{-1/2}\big)H(A)^{1/2} \\
& = H(A)^{1/2} K H(A)^{1/2},
\end{align*}
where $K := I + iH(A)^{-1/2}\tfrac{1}{i}S(A)H(A)^{-1/2}$ is a normal matrix%
\footnote{To see this, note that a matrix $A$ is normal if and only if $H(A)$ and $\tfrac{1}{i}S(A)$ commute, see, e.g., \cite[Thm.~9.1]{zhang2011matrix}.}
(cf.~\cite[Proof of Cor.~2.5]{zhang2015matrix})
which by construction depends smoothly on $A$.
Now, let $K = V_KQ_K$ be the polar decomposition of $K$. Since $K$ depends smoothly on $A$, and since the polar decomposition is smooth in the matrix (Lemma~\ref{lem:polar_decomp_smooth}), $V_K$ and $Q_K$ are smooth in $A$. Moreover, since $K$ is normal, $V_K$ and $Q_K$ commute \cite[Thm.~9.1]{zhang2011matrix}, and thus $V_K$ and $Q_K^{1/2}$ commute. Therefore, $A = H(A)^{1/2}Q_K^{1/2}U_KQ_K^{1/2}H(A)^{1/2}$,
where all components depend smoothly on $A$. Now, let $L := Q_K^{1/2}H(A)^{1/2}$ and let $L = V_L Q_L$ be the polar decomposition of $L$. Similar to before, $V_L$ and $Q_L$ are thus both smooth in $A$. Finally, we thus have that
$A = Q_L V_L^*U_K V_L Q_L,$
and since $V_L$ and $U_K$ are unitary so is $V_L^*U_KV_L$. By the uniqueness of the symmetric polar decomposition \cite[Thm.~3.1]{wang2019phases}
it follows that
$P = Q_L$ and $U = V_L^*U_K V_L$,
which are both smooth in $A$. 
\end{proof}

\begin{proof}[Proof of Theorem~\ref{thm:decomp_manifold}]
By Proposition~\ref{prop:smooth_varying_SPD}, $\mA_n \cong \mP_n \times \scriptUA$, and by \cite[Prop.~5.3]{lee2013introduction}, both $\mP_n \times \{ I \} \cong \mP_n$ and $\{ I \} \times \scriptUA \cong \scriptUA$ are embedded submanifolds of $\mP_n \times \scriptUA$.
\end{proof}

\begin{remark}\label{rem:rotation_of_A}
The results in Theorem~\ref{thm:A_smooth_manifold} and \ref{thm:decomp_manifold} can easily be generalized to other subsets of sectorial matrices, namely any subset $\tilde{\mA}_n \subset \mD_n$ of all matrices $A$ such that there exists $\alpha, \beta \in \mR$, $\alpha < \beta$, and $\beta - \alpha = \pi$, for which $\min_{k = 1, \ldots n} \phi_k(A) > \alpha$ and $\max_{k = 1, \ldots n} \phi_k(A) < \beta$. To see this, note that for $\tilde{A} = \tilde{P}\tilde{U}\tilde{P} \in \tilde{\mA}_n$ we have that  $A = \tilde{P} (e^{-(\beta + \alpha)/2}\tilde{U}) \tilde{P} \in \mA_n$, i.e., that $ e^{-(\beta + \alpha)/2} \tilde{\mA}_n =\mA_n$ with a diffeomorphism between the components in the symmetric polar decomposition. Examples of such sets of matrices are 
and the set of strictly dissipative matrices, i.e., matrices such that $\Re(A) \prec 0$ \cite[p.~279]{kato1995perturbation}, and matrices such that $\Im(A) \succ 0$.%
\footnote{Note that the latter has unfortunately also been termed dissipative in the literature, see \cite[p.~279]{kato1995perturbation}.}
\end{remark}

\section{A family of Finsler metrics on $\mA_n$ and their geodesics}\label{sec:riemann_A}

In this section we introduce a family of Finsler structures on $\mA_n$
and in particular we will characterize the geodesics and geodesic distances corresponding to these structures.
To this end,  by Theorem~\ref{thm:decomp_manifold} we have that $\mA_n \cong \mP_n \times \scriptUA$.
Moreover, $\mP_n$ and $\mU_n$ are smooth manifolds that are well-studied in the literature, and since $\mP_n$ and $\scriptUA$ are embedded submanifolds of $\mA_n$ a desired property
would be that when restricted to any of the two embedded submanifold the introduced Finsler structure would yield the corresponding known Finsler structure.
To this end, we first characterize the geodesics and the geodesic distance on $\scriptUA$. 

\begin{proposition}\label{prop:short_curves_script_U}
Let $\| \cdot \|_{\Phi}$ be any smooth unitarily invariant norm and consider the Finsler structure given by $F_{\scriptUA}^{\Phi} : T \scriptUA \to \mR_+$, $F_{\scriptUA}^{\Phi} : (U,X) \mapsto \| X \|_{\Phi}$. A geodesic between $U \in \scriptUA$ and $V \in \scriptUA$ is given by
\[
\gamma_{\scriptUA}(t) = Ue^{t \log(U^{-1}V)} = U^{1/2} (U^{-1/2} V U^{-1/2})^t U^{1/2}.
\]
Moreover, the geodesic distance is given by
\begin{equation}\label{eq:dist_manifold_AU}
\delta_{\scriptUA}^{\Phi}(U, V) =  \| \log(U^{-1}V) \|_{\Phi} =  \| \log(U^{-1/2} V U^{-1/2}) \|_{\Phi}.
\end{equation}
\end{proposition}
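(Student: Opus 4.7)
My plan is to reduce the problem to the known geodesic formula on $\mU_n$ recorded in~\eqref{eq:geodesic_on_U}--\eqref{eq:geodesic_distance_U}. The starting observation is that $\scriptUA$ is an open submanifold of $\mU_n$ (the condition $H(U) \succ 0$ is open), so at each point the tangent space is still $\mS_n$ and the Finsler integrand $F^\Phi_{\scriptUA}$ is the restriction of $F^\Phi_{\mU_n}$. Consequently every smooth curve in $\scriptUA$ has the same arc length computed in either manifold, and to produce a geodesic in $\scriptUA$ between $U$ and $V$ it suffices to exhibit a $\mU_n$-geodesic between them whose image happens to be contained in $\scriptUA$.

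I would first verify that $Z := -i\log(U^{-1}V)$, with $\log$ the principal matrix logarithm, is well defined and satisfies $\|Z\|_{\text{sp}} < \pi$. The principal log of the unitary $U^{-1}V$ exists iff $-1 \notin \lambda(U^{-1}V)$, equivalently $U+V$ is invertible. But $H(U+V) = H(U)+H(V) \succ 0$, so $(U+V)x = 0$ forces $x^* H(U+V) x = 0$ and hence $x = 0$. Thus $Z \in \mH_n$ with $\|Z\|_{\text{sp}} < \pi$ and $V = Ue^{iZ}$, and~\eqref{eq:geodesic_on_U}--\eqref{eq:geodesic_distance_U} give that $\gamma(t) := Ue^{itZ}$ is a $\mU_n$-geodesic of length $\|Z\|_\Phi$: by unitary invariance of $\|\cdot\|_\Phi$, the tangent $\dot\gamma(t) = iUe^{itZ}Z$ has constant $\Phi$-norm $\|Z\|_\Phi$.

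The main obstacle is to show that $\gamma(t) \in \scriptUA$ for every $t \in [0,1]$. Since $\gamma(t)$ is unitary and hence normal, this amounts to all eigenvalues of $\gamma(t)$ lying in the open right half of the unit circle, equivalently $H(\gamma(t)) \succ 0$. I would argue by continuity: the set $T := \{t \in [0,1] : \gamma(t) \in \scriptUA\}$ is open in $[0,1]$ and contains the endpoints, so if $T \neq [0,1]$ there is a first escape point $t_0 \in (0,1)$ at which $\gamma(t_0)$ has $\pm i$ as an eigenvalue. One would then derive a contradiction from $H(U+V) \succ 0$, using the explicit form $\gamma(t_0) = Ue^{it_0 Z}$ together with the endpoint conditions to constrain the corresponding eigenvector; a natural route is to reduce to the commuting case by restricting to a $Z$-invariant subspace, where the phases of $\gamma(t)$ become honest convex combinations of the corresponding phases of $U$ and $V$ and so remain in $(-\pi/2, \pi/2)$. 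Filling in this spectral bookkeeping is the technical core of the proof.

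With $\gamma \subset \scriptUA$ in hand, minimality is automatic: any competing piecewise-smooth curve $\eta : [0,1] \to \scriptUA$ from $U$ to $V$ is also a curve in $\mU_n$ with the same length integrand, so $\mathcal{L}(\eta) \geq \delta^\Phi_{\mU_n}(U,V) = \|Z\|_\Phi = \mathcal{L}(\gamma)$. Finally, I would verify the second expression for $\gamma$ and the second equality in~\eqref{eq:dist_manifold_AU}. From $V = Ue^{iZ}$ one has $U^{-1/2}VU^{-1/2} = U^{1/2}e^{iZ}U^{-1/2}$, which is similar to $e^{iZ}$ via the invertible matrix $U^{1/2}$; hence its principal $t$-th power is $U^{1/2}e^{itZ}U^{-1/2}$, and flanking by $U^{1/2}$ on both sides recovers $Ue^{itZ} = \gamma(t)$. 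For the norm identity, $\log(U^{-1/2}VU^{-1/2}) = U^{1/2}(iZ)U^{-1/2}$ is a unitary conjugation of $iZ$ (as $U^{1/2}$ is unitary), so it has the same singular values as $iZ$, giving $\|\log(U^{-1/2}VU^{-1/2})\|_\Phi = \|Z\|_\Phi = \|\log(U^{-1}V)\|_\Phi$.
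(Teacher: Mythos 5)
Your overall framing coincides with the paper's: reduce to the known $\mU_n$-geodesic $\gamma(t)=Ue^{itZ}$, check that $Z=-i\log(U^{-1}V)$ is well defined with $\|Z\|_{\text{sp}}<\pi$, show the curve stays in $\scriptUA$, and obtain minimality for free because $\scriptUA$ is open in $\mU_n$, so competing curves have the same length as curves in $\mU_n$. Your well-definedness argument (that $-1\in\lambda(U^{-1}V)$ would make $U+V$ singular, contradicting $H(U+V)=H(U)+H(V)\succ 0$) is a nice elementary substitute for the paper's appeal to \cite{thompson1974eigenvalues} and \cite[Thm.~6.2]{wang2019phases}, and your verification of the second expression for $\gamma$ and of the second equality in \eqref{eq:dist_manifold_AU} is correct.

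However, the step you yourself flag as ``the technical core''--- that $\gamma(t)\in\scriptUA$ for all $t\in[0,1]$ --- is exactly the substance of the proposition, and your sketch of it does not go through as described. Restricting to a $Z$-invariant subspace does not reduce matters to the commuting case: such a subspace need not be invariant under $U$, an eigenvector of $\gamma(t_0)$ need not be an eigenvector of $Z$ or of $U$, and when $U$ and $Z$ do not commute the eigenvalue phases of $Ue^{itZ}$ are not convex combinations of the corresponding phases of $U$ and $V$ (they depend nonlinearly on $t$, and only weaker, majorization-type relations for phases of products are available). So the proposal has a genuine gap precisely where the proof must do work. The paper closes this step differently: it recognizes the midpoint $\gamma(1/2)=U^{1/2}(U^{-1/2}VU^{-1/2})^{1/2}U^{1/2}$ as Drury's geometric mean of the strictly accretive matrices $U$ and $V$, which is strictly accretive by \cite[Prop.~3.1 and Thm.~3.4]{drury2015principal}; iterating this bisection yields strict accretivity of $\gamma(t)$ on a dense (dyadic) set of $t\in[0,1]$, and continuity of $t\mapsto\gamma(t)$ then finishes the argument. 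To keep your ``first escape point'' strategy you would need to supply an actual spectral argument at $t_0$ (for instance via phase bounds for products of unitaries), which is the missing ingredient; alternatively, adopt the geometric-mean bisection argument above.
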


\begin{proof}
Let $U,V \in \scriptUA$. The proposition follows if we can show that a geodesic on $\mU_n$ between $U$ and $V$, given by \eqref{eq:geodesic_on_U}, remains in $\scriptUA$ for $t\in[0,1]$, and that $Z$ in \eqref{eq:geodesic_on_U} and \eqref{eq:geodesic_distance_U} takes the form $Z = -i\log(U^*V)$. To show the latter, 
note that
$e^{iZ} = U^{-1}V = U^*V$,
where $\lambda(U^*V) \cap \mR_- = \emptyset$ since both $U^*$ and $V$ are strictly accretive, see \cite{thompson1974eigenvalues}, \cite[Thm.~6.2]{wang2019phases}.
Therefore we can use the principle branch of the logarithm, which gives
$Z = -i\log(U^*V)$.
Next, to show that $\gamma_{\mU_n}(t) \in \scriptUA$ for $t \in [0,1]$, note that $\gamma_{\mU_n}(1/2) = U^{1/2} (U^{-1/2} V U^{-1/2})^{1/2} U^{1/2}$. By \cite[Prop.~3.1 and Thm.~3.4]{drury2015principal},  $\gamma_{\mU_n}(1/2)$ is strictly accretive, and a repeated argument now gives that $\gamma_{\mU_n}(t)$ is strictly accretive for 
a dense set of $t \in [0,1]$. By continuity of the map $t \mapsto \gamma_{\mU_n}(t)$, the result follows.
\end{proof}

Next, let $\Phi_1$ and $\Phi_2$ be two smooth symmetric gauge functions and consider the Finsler manifolds $(\mP_n, F_{\mP_n}^{\Phi_1})$ and $(\scriptUA, F_{\mU_n}^{\Phi_2})$ as defined in Proposition~\ref{prop:new_form_curve_P} and Proposition~\ref{prop:short_curves_script_U}, respectively. 
In the Riemannian case, there is a canonical way to introduce a metric on $\mP_n \times \scriptUA$, namely the product metric \cite[Ex.~13.2]{lee2013introduction}.
However, in the case of products of Finsler manifolds there is no canonical way to introduce a Finsler structure on a product space, cf.~\cite[Ex.~11.1.6]{bao2000introduction}, \cite{okada1982minkowskian}.
Here, we consider so called (Minkowskian) product manifolds \cite{okada1982minkowskian} and 
to this end we next define so called (Minkowskian) \emph{product functions}.

\begin{definition}[\cite{okada1982minkowskian}]\label{def:prod_fun}
A function $\Psi : \mR_+ \times \mR_+ \to \mR_+$ is called a \emph{product function} if it satisfies the following conditions:
\begin{enumerate}[i)]
\item $\Psi(x_1, x_2) = 0$ if and only if $(x_1, x_2) = (0, 0)$,
\item $\Psi(\alpha x_1, \alpha x_2) = \alpha \Psi(x_1, x_2) $ for all $(x_1, x_2) \in \mR_+ \times \mR_+$ and all $\alpha \in \mR_+$,
\item $\Psi$ is smooth on $\mR_+ \times \mR_+ \setminus \{ (0,0) \}$, 
\item $\partial_{x_\ell} \Psi \neq 0$  on $\mR_+ \times \mR_+ \setminus \{ (0,0) \}$, for $\ell = 1, 2$,
\item $\partial_{x_1} \Psi \, \partial_{x_2} \Psi - 2 \Psi \partial_{x_1}\partial_{x_2} \Psi \neq 0$ on $\mR_+ \times \mR_+ \setminus \{ (0,0) \}$.
\end{enumerate}
\end{definition}

For any product function $\Psi$,  $(\mP_n \times \scriptUA, \sqrt{\Psi((F_{\mP_n}^{\Phi_1})^2, ( F_{\scriptUA}^{\Phi_2})^2)})$ is a Finsler manifold \cite{okada1982minkowskian},
and we therefore define the Finsler manifold $(\mA_n, F_{\mA_n}^{\Phi_1, \Phi_2, \Psi})$ as follows.%
\footnote{In \cite{okada1982minkowskian}, the convention is that the Finsler structure is squared compared to the one in \cite{bao2000introduction}.
We follow the convention of the latter.}%
\footnote{Note that functions $\Psi$ fulfilling i)-v) are not necessarily symmetric gauge functions. As an example, consider $\Psi(x,y) = (x^2 + 3xy + y^2)^2$ \cite[Rem.~6]{okada1982minkowskian}; this is not a symmetric gauge function since in general $\Psi(x, y) \neq \Psi(x, -y)$. Conversely,  symmetric gauge functions do not in general fulfill i)-v). As an example, consider $\Phi(x,y) = \max\{ |x|, |y| \}$ \cite[p.~138]{marshall2011inequalities}; at any point $(x,y) \in \mR_+ \times \mR_+$ such that $x > y$, $\partial_y \Phi = 0$ and hence condition iv) is not fulfilled for this function.}

\begin{definition}
Let $\Phi_1$ and $\Phi_2$ be two smooth symmetric gauge functions, 
let $(\mP_n, F_{\mP_n}^{\Phi_1})$ and $(\scriptUA, F_{\mU_n}^{\Phi_2})$ be the Finsler manifolds as defined in Proposition~\ref{prop:new_form_curve_P} and Proposition~\ref{prop:short_curves_script_U}, respectively, and let $\Psi$ be a product function. The Finsler manifold $(\mA_n, F_{\mA_n}^{\Phi_1, \Phi_2, \Psi})$ is defined via the diffeomorphis in Proposition~\ref{prop:smooth_varying_SPD} as
\[
(\mA_n, F_{\mA_n}^{\Phi_1, \Phi_2, \Psi}) := \left(\mP_n \times \scriptUA, \sqrt{\Psi((F_{\mP_n}^{\Phi_1})^2, ( F_{\scriptUA}^{\Phi_2})^2}) \right).
\] 
\end{definition}

One particular example of a product function is $\Psi : (x_1, x_2) \mapsto x_1 +x_2$, which in \cite{okada1982minkowskian} this was called ``the Euclidean product'', and in the Riemannian case this leads to the canonical product manifold.
Moreover, the geodesics and geodesic distance can be characterized for general product functions $\Psi$. This leads to the following result.

\begin{theorem}\label{thm:main_result}
Let $A, B \in \mA_n$, and let $A = P_A U_A P_A$ and $B = P_B U_B P_B$ be the corresponding symmetric polar decompositions. On the Finsler manifold $(\mA_n, F_{\mA_n}^{\Phi_1, \Phi_2, \Psi})$, a geodesic from $A$ to $B$ is given by
\begin{subequations}\label{eq:geodesics_on_A}
\begin{equation}\label{eq:geodesics_on_A_1}
\gamma_{\mA_n}(t) = \gamma_{\mP_n}(t)^{1/2} \cdot \gamma_{\scriptUA}(t) \cdot \gamma_{\mP_n}(t)^{1/2},
%\gamma(t) = \big(\gamma_{\mP_n}(t)\big)^{1/2} \cdot \gamma_{\scriptUA}(t) \cdot \big(\gamma_{\mP_n}(t)\big)^{1/2},
\end{equation}
where
\begin{align}
& \gamma_{\mP_n}(t) := P_A(P_A^{-1} P_BP_B P_A^{-1})^t P_A = P_A^2 e^{t \log(P_A^{-2} P_B^2)}, \label{eq:geodesics_on_A_2} \\
& \gamma_{\scriptUA}(t) := U_A^{1/2} (U_A^{-1/2} U_B U_A^{-1/2})^t U_A^{1/2} = U_A e^{t \log(U_A^*U_B)}. \label{eq:geodesics_on_A_3}
\end{align}
\end{subequations}
Moreover, the geodesic distance from $A$ to $B$ is given by
\begin{align}
\delta_{\mA_n}^{\Phi_1, \Phi_2, \Psi}(A, B) & = \sqrt{\Psi\Big(\delta_{\mP_n}^{\Phi_1}(P_AP_A, P_BP_B)^2, \delta_{\scriptUA}^{\Phi_2}(U_A, U_B)^2\Big)} \nonumber \\
& = \sqrt{\Psi\Big(\|\log (P_A^{-1} P_BP_B P_A^{-1} )\|_{\Phi_1}^2, \|\log(U_A^*U_B) \|_{\Phi_2}^2\Big)} \label{eq:distance_on_A} \\
& = \sqrt{\Psi\Big(\Phi_1(\lambda( \log(P_A^{-1} P_BP_B P_A^{-1} )))^2, \Phi_{2}(\varphi(U^{-1}_AU_B ))^2\Big)}, \nonumber
\end{align}
where $\varphi(\cdot)$ denotes the angles of the eigenvalues.
\end{theorem}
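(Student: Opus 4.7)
The plan is to transport the problem to the product manifold $\mP_n \times \scriptUA$ via the diffeomorphism $A \mapsto (P_A^2, U_A)$ from Proposition~\ref{prop:smooth_varying_SPD} and exploit the structure of Minkowskian product Finsler manifolds. Under this diffeomorphism, computing geodesics and the geodesic distance from $A$ to $B$ in $(\mA_n, F_{\mA_n}^{\Phi_1, \Phi_2, \Psi})$ is equivalent to computing them from $(P_A^2, U_A)$ to $(P_B^2, U_B)$ in $(\mP_n \times \scriptUA, \sqrt{\Psi((F_{\mP_n}^{\Phi_1})^2, (F_{\scriptUA}^{\Phi_2})^2)})$. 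The key fact, established in \cite{okada1982minkowskian} using precisely conditions (i)--(v) on $\Psi$, is that on such a Minkowskian product manifold the geodesics decouple into geodesics in each factor and the geodesic distance is $\sqrt{\Psi(d_1^2, d_2^2)}$, where $d_i$ is the geodesic distance in the $i$th factor. Heuristically, after reparametrizing so that $F_i(\gamma_i, \dot\gamma_i)$ is constant in $t$, the length functional becomes strictly monotone in each of the component lengths, so its infimum is attained exactly when each component curve is itself a minimizing geodesic of its factor parametrized proportionally to arc length.

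Granted this, I would apply Proposition~\ref{prop:new_form_curve_P} to the pair $(P_A^2, P_B^2) \in \mP_n \times \mP_n$ to obtain the factor geodesic $\gamma_{\mP_n}(t)$ in \eqref{eq:geodesics_on_A_2} together with the distance $\|\log(P_A^{-1} P_B^2 P_A^{-1})\|_{\Phi_1}$, and apply Proposition~\ref{prop:short_curves_script_U} to the pair $(U_A, U_B) \in \scriptUA \times \scriptUA$ to obtain $\gamma_{\scriptUA}(t)$ in \eqref{eq:geodesics_on_A_3} together with the distance $\|\log(U_A^* U_B)\|_{\Phi_2}$. Pushing the product curve $(\gamma_{\mP_n}(t), \gamma_{\scriptUA}(t))$ back to $\mA_n$ through the inverse diffeomorphism $(X, U) \mapsto X^{1/2} U X^{1/2}$ immediately gives the formula $\gamma_{\mA_n}(t) = \gamma_{\mP_n}(t)^{1/2} \gamma_{\scriptUA}(t) \gamma_{\mP_n}(t)^{1/2}$ of \eqref{eq:geodesics_on_A_1}, while plugging the factor distances into $\sqrt{\Psi(\cdot,\cdot)}$ yields the first two expressions in \eqref{eq:distance_on_A}. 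The third expression follows from the eigenvalue manipulations already used in the proof of Proposition~\ref{prop:new_form_curve_P} for the $\mP_n$ factor and, for the $\scriptUA$ factor, from the observation that $U_A^{-1} U_B$ is unitary, so that $\log(U_A^{-1} U_B)$ has purely imaginary eigenvalues whose magnitudes coincide with $|\varphi(U_A^{-1} U_B)|$; invariance of $\Phi_2$ under sign changes then absorbs the absolute values.

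The main obstacle is justifying the decoupling claim from \cite{okada1982minkowskian}: one must argue that minimizing the non-separable integrand $\sqrt{\Psi(F_1^2, F_2^2)}$ subject to endpoint constraints in the two factors reduces to independently minimizing $F_1$ and $F_2$. Condition (iv) on $\Psi$ (nonvanishing first partials, so that $\Psi$ is strictly increasing in each argument on $\mR_+ \times \mR_+$) plays the central role in this step, guaranteeing that shrinking the length of either factor strictly decreases the length of the product curve, while conditions (iii) and (v) ensure that $\sqrt{\Psi(F_1^2, F_2^2)}$ is a regular Finsler structure on the slit tangent bundle so that standard Finsler-geometric reasoning applies.
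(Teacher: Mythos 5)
Your proposal is correct and follows essentially the same route as the paper: the Finsler structure on $\mA_n$ is by definition the Minkowskian product structure pulled back through the diffeomorphism of Proposition~\ref{prop:smooth_varying_SPD}, so the paper likewise invokes \cite[Thm.~3]{okada1982minkowskian} for the decoupling of geodesics and the distance formula $\sqrt{\Psi(d_1^2,d_2^2)}$, combines it with Propositions~\ref{prop:new_form_curve_P} and \ref{prop:short_curves_script_U} for the factor geodesics, and finishes with the same Hermitian/skew-Hermitian eigenvalue and phase manipulations for the last equality in \eqref{eq:distance_on_A}. The only difference is one of emphasis: you sketch a heuristic justification of the decoupling, whereas the paper simply cites Okada's theorem for it.
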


\begin{proof}
That $(\mA_n, F_{\mA_n}^{\Phi_1, \Phi_2, \Psi})$ is a Finsler manifold follows from the discussion leading up to the theorem; see \cite{okada1982minkowskian}.
Moreover, that \eqref{eq:geodesics_on_A} is a geodesic follows (by construction) by using \cite[Thm.~3]{okada1982minkowskian} together with  Proposition~\ref{prop:new_form_curve_P} and Proposition~\ref{prop:short_curves_script_U}; this also gives the first two equalities in \eqref{eq:distance_on_A}. To prove the last equality, first observe that $P^{-1}P_BP_BP^{-1}$ is the geometric mean of $P_A^2$ and $P_B^2$ and hence positive definite \cite[Thm.~4.1.3]{bhatia2007positive}, \cite[Sec.~3]{drury2015principal}. Therefore, $\log (P_A^{-1} P_BP_B P_A^{-1})$ is Hermitian and thus 
\[
\sigma(\log (P_A^{-1} P_BP_B P_A^{-1}) = |\lambda(\log (P_A^{-1} P_BP_B P_A^{-1}))|.
\]
Similarly, $U^{-1}_AU_B$ is unitary and thus $\log(U_A^{-1}U_B)$ is skew-Hermitian. Therefore, $\lambda(\log(U_A^{-1}U_B)) = -i\varphi(U_A^{-1}U_B)$ and hence 
\[
\sigma(\log(U_A^{-1}U_B)) = | \lambda(\log(U_A^{-1}U_B)) | = |\varphi(U_A^{-1}U_B)|.
\] 
Finally, for unitary invariant norms we have that $\| \cdot \|_{\Phi} = \Phi(\sigma(\cdot))$, and since
for any symmetric gauge function $\Phi(|x|) = \Phi(x)$, the last equality follows.
\end{proof}

Next, we derive some properties of the geodesic distance in Theorem~\ref{thm:main_result}.

\begin{proposition}\label{prop:properties_of_geo_dist}
For matrices $A,B \in (\mA_n, F_{\mA_n}^{\Phi_1, \Phi_2, \Psi})$ we have that
\begin{enumerate}[1)]
\item 
$ \displaystyle
\delta_{\mA_n}^{\Phi_1, \Phi_2, \Psi}(A^{-1}, B^{-1}) = \delta_{\mA_n}^{\Phi_1, \Phi_2, \Psi}(A, B) %\sqrt{\Psi\Big(\Phi_1(1/\lambda(P_A^{-1} P_BP_B P_A^{-1} ))^2, \Phi_{2}(\phi(U^{-1}_AU_B ))^2\Big)},
$
\item 
$ \displaystyle
\delta_{\mA_n}^{\Phi_1, \Phi_2, \Psi}(A^*, B^*) = \delta_{\mA_n}^{\Phi_1, \Phi_2, \Psi}(A, B)
$
\item 
$ \displaystyle
\delta_{\mA_n}^{\Phi_1, \Phi_2, \Psi}(A^{-1}, A) = 2\delta_{\mA_n}^{\Phi_1, \Phi_2, \Psi}(I, A) = 2\delta_{\mA_n}^{\Phi_1, \Phi_2, \Psi}(I, A^{-1})$,
and the geodesic midpoint between $A^{-1}$ and $A$ is $\gamma_{\mA_n}(1/2) = I$
\item  for any $U \in \mU_n$ we have that
$ \displaystyle
\delta_{\mA_n}^{\Phi_1, \Phi_2, \Psi}(U^*AU, U^*BU)  = \delta_{\mA_n}^{\Phi_1, \Phi_2, \Psi}(A, B).
$
\end{enumerate}
\end{proposition}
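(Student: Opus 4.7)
The strategy for all four identities is the same: identify the symmetric polar decompositions of the transformed matrices, substitute into the explicit distance formula \eqref{eq:distance_on_A}, and exploit (a) invariance of symmetric gauge functions under sign changes, (b) unitary invariance of $\| \cdot \|_\Phi$, and (c) homogeneity of the product function $\Psi$ (Definition~\ref{def:prod_fun}~ii)). The only subtlety is branch tracking for the matrix logarithm, which is controlled because every unitary that appears is strictly accretive and hence has phases in $(-\pi/2, \pi/2)$.

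\textbf{Polar decomposition bookkeeping.} I would first compile a short lemma giving the symmetric polar decomposition of the transformed matrices. If $A = P_A U_A P_A$, then $H(U_A^*) = H(U_A) \succ 0$, so $U_A^* \in \scriptUA$, and
\[
A^{-1} = P_A^{-1} U_A^{*} P_A^{-1}, \qquad A^{*} = P_A U_A^{*} P_A, \qquad U^{*} A U = (U^{*} P_A U)(U^{*} U_A U)(U^{*} P_A U),
\]
where in the last case $U^{*} P_A U \in \mP_n$ and $H(U^{*} U_A U) = U^{*} H(U_A) U \succ 0$. By uniqueness of the symmetric polar decomposition \cite[Thm.~3.1]{wang2019phases}, these are the symmetric polar decompositions.

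\textbf{Proving the four identities.} For (1), substituting the decompositions of $A^{-1}$ and $B^{-1}$ into \eqref{eq:distance_on_A} gives $\log(P_A P_B^{-2} P_A) = -\log(P_A^{-1} P_B^{2} P_A^{-1})$ in the first slot, whose $\Phi_1$-norm equals $\|\log(P_A^{-1} P_B^{2} P_A^{-1})\|_{\Phi_1}$ by sign invariance. In the second slot we get $\log(U_A U_B^{*})$, and since $U_A U_B^{*} = U_A (U_A^{*} U_B)^{-1} U_A^{*} = U_A \bigl[-\log(U_A^{*} U_B)\bigr]$-conjugate... more cleanly, $U_A U_B^* = U_A (U_A^* U_B)^{-1} U_A^{-1}$ wait — the direct argument is $U_A U_B^* = -U_A \log(U_A^* U_B)^{\text{exp-inverse}}$; the simplest route is: $\log(U_A U_B^*)$ has the same singular values as $\log(U_B^* U_A)$ (similarity by the unitary $U_A$), and $\log(U_B^* U_A) = -\log(U_A^* U_B)$. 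Hence the $\Phi_2$-norm is preserved. For (2) the first slot is unchanged and the second slot gives $\log(U_A U_B^*)$, handled identically.

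For (4), the slots become $\log(U^*(P_A^{-1} P_B^2 P_A^{-1})U) = U^{*}\log(P_A^{-1} P_B^{2} P_A^{-1})U$ and $\log(U^{*} U_A^{*} U_B U) = U^{*}\log(U_A^{*} U_B) U$; unitary invariance of $\|\cdot\|_{\Phi_j}$ gives the claim.

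\textbf{The midpoint / doubling identity.} This is the main calculation and the only place where branch choice matters. Substituting $A^{-1}$ and $A$ into \eqref{eq:geodesics_on_A_2}--\eqref{eq:geodesics_on_A_3}, the $\mP_n$-factor becomes $\log(P_A P_A^{2} P_A) = \log(P_A^{4}) = 4 \log(P_A) = 2\log(P_A^{2})$ since $P_A \in \mP_n$. The $\scriptUA$-factor becomes $\log(U_A^{2})$, and since $U_A \in \scriptUA$ its phases lie in $(-\pi/2, \pi/2)$, so the eigenvalues of $U_A^{2}$ avoid $\mR_{-}$ and the principal branch gives $\log(U_A^{2}) = 2\log(U_A)$; this is the one nontrivial analytic point. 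Comparing with $\delta_{\mA_n}^{\Phi_1, \Phi_2, \Psi}(I, A)$, which uses $\log(P_A^{2})$ and $\log(U_A)$, and invoking homogeneity $\Psi(4x,4y) = 4\Psi(x,y)$ from Definition~\ref{def:prod_fun}~ii), I get $\delta_{\mA_n}^{\Phi_1, \Phi_2, \Psi}(A^{-1},A) = 2\delta_{\mA_n}^{\Phi_1, \Phi_2, \Psi}(I,A)$. The equality $\delta(I,A) = \delta(I,A^{-1})$ is the $B=I$ case of (1). Finally, for the midpoint, inserting $A^{-1}$ and $A$ into \eqref{eq:geodesics_on_A_2}--\eqref{eq:geodesics_on_A_3} at $t=1/2$ gives $\gamma_{\mP_n}(1/2) = P_A^{-2}(P_A^{4})^{1/2} = I$ and $\gamma_{\scriptUA}(1/2) = U_A^{*}(U_A^{2})^{1/2} = I$, whence $\gamma_{\mA_n}(1/2) = I^{1/2} \cdot I \cdot I^{1/2} = I$.

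\textbf{Anticipated obstacle.} The only real issue is justifying $\log(U_A^{2}) = 2\log(U_A)$ and $(U_A^{2})^{1/2} = U_A$ on $\scriptUA$; both reduce to the phase-range bound $\phi_k(U_A) \in (-\pi/2, \pi/2)$ established in Section~\ref{sec:background}. Everything else is algebra plus the invariance properties of $\Phi_j$ and the homogeneity of $\Psi$.
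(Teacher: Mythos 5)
Your proposal is correct and follows essentially the same route as the paper: read off the symmetric polar decompositions of $A^{-1}$, $A^{*}$, and $U^{*}AU$ from uniqueness, substitute into \eqref{eq:distance_on_A}, and use sign/unitary invariance of the gauge functions together with homogeneity of $\Psi$ for the doubling identity and the midpoint computation. Your explicit tracking of the principal branch (phases of $U_A$ in $(-\pi/2,\pi/2)$ giving $\log(U_A^2)=2\log U_A$ and $(U_A^2)^{1/2}=U_A$) is a slightly more careful rendering of what the paper handles implicitly at the level of eigenvalue angles, but it is not a different argument.
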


\begin{proof}
To prove the statements, first note that $A^{-1} = P_A^{-1}U_A^{-1}P_A^{-1}$, and that $A^* = P_A U_A^{-1} P_A$.
To prove 1), we observe that
\[
\delta_{\mA_n}^{\Phi_1, \Phi_2, \Psi}(A^{-1}, B^{-1}) = \sqrt{\Psi\Big(\Phi_1(\lambda(\log(P_A P_B^{-1}P_B^{-1} P_A)))^2, \Phi_{2}(\varphi(U_AU_B^{-1} ))^2\Big)}.
\]
For the positive definite part, we have that 
\[
\lambda(\log(P_A P_B^{-1}P_B^{-1} P_A)) = \lambda(-\log((P_A P_B^{-1}P_B^{-1} P_A)^{-1})) = -\lambda(\log(P_A^{-1} P_BP_B P_A^{-1})),
\]
and since the symmetric gauge function $\Phi_1$ is invariant under sign changes the distance corresponding to the positive definite part is equal.
Similarly, for the strictly accretive unitary part we have that $|\varphi(U_AU_B^{-1})| = |\varphi((U_AU_B^{-1})^*)| = |\varphi(U_BU_A^{-1})| = |\varphi(U_A^{-1}U_B)|$, where the first equality follows from that the absolute value of the angles of the eigenvalues of a unitary matrix are invariant under the operation of taking conjugate transpose, and the last equality follows since the angles of the eigenvalues are invariant under unitary congruence.
Statement 2) follows by a similar argument. To prove statement 3), 
\begin{align*}
\delta_{\mA_n}^{\Phi_1, \Phi_2, \Psi}(A^{-1}, A) &= \sqrt{\Psi\Big(\Phi_1(\lambda(\log(P_A^{-4})))^2, \Phi_{2}(\varphi(U_A^{-2} ))^2\Big)} \\
& = \sqrt{\Psi\Big(\Phi_1(-2 \lambda(\log(P_A^{2})))^2, \Phi_{2}(-2\varphi(U_A ))^2\Big)} \\
& = 2\sqrt{\Psi\Big(\Phi_1(\lambda(\log(P_A^{2})))^2, \Phi_{2}(\varphi(U_A ))^2\Big)} = 2\delta_{\mA_n}^{\Phi_1, \Phi_2, \Psi}(I, A)
\end{align*}
where the second equity follows by an argument similar to previous ones, and the third equality follows from property ii) for symmetric gauge functions and property ii) for product functions. The second equality in 3) now follows from 1), and that $\gamma_{\mA_n}(1/2) = I$ follows by a direct calculation using \eqref{eq:geodesics_on_A}. Finally, to prove 4), simply note that $U^*AU = U^*P_AU_AP_AU = U^*P_AU U^* U_A U U^*P_AU$, i.e., the same unitary congruence transformation applied to $P_A$ and $U_A$ individually. A direct calculation, using the unitary invariance of eigenvalues, the matrix logarithm, and the norms, gives the result. 
\end{proof}

\begin{remark}
Note that $(\mA_n, F_{\mA_n}^{\Phi_1, \Phi_2, \Psi})$ is in general not
a complete metric space. In particular, in the Riemannian case, i.e., with symmetric gauge functions $\Phi_\ell : x \in \mR^n \mapsto \sqrt{\sum_{k = 1}^n x_k^2}$, $\ell = 1, 2$, and product function $\Psi : (x_1 ,x_2) \mapsto x_1 + x_2$, 
$(\mA_n, F_{\mA_n}^{\Phi_1, \Phi_2, \Psi})$
is not a complete metric space since it is not geodesically complete \cite[Thm.~6.19]{lee2018introduction}.
The latter is due to the fact that $(\scriptUA, F_{\scriptUA}^{\Phi_2})$ is not geodesically complete; for $(\scriptUA, F_{\scriptUA}^{\Phi_2})$ geodesics are not defined for all $t \in \mR_+$ since they will reach the boundary. For an example of a Cauchy sequence that does not converge to an element in $(\mA_n, F_{\mA_n}^{\Phi_1, \Phi_2, \Psi})$, consider the sequence $(A_\ell)_{\ell = 1}^\infty$, where $A_\ell = e^{i(\pi/2 - \pi/(2\ell))}I \in \mA_n$ for all $\ell$. In this case, the geodesic distance between $A_\ell$ and $ A_{k}$ is given by
\[
\delta_{\mA_n}(A_\ell, A_{k}) =  \| Z^{(\ell, k)} \| = \sqrt{n} \frac{\pi}{2} \left| \frac{1}{\ell} - \frac{1}{k} \right|,
\]
since $Z^{(\ell, k)} := -i\log(A_\ell^* A_k) = -i\log(e^{i(\pi/(2\ell) - \pi/(2k))}I) = \pi/2 (1/\ell - 1/k)I$. Thus  $(A_\ell)_{\ell = 1}^\infty$ is a Cauchy sequence, however
$\lim_{\ell \to \infty} A_\ell = e^{i\pi/2}I \not \in \mA_n$.
Since the Hopf-Rinow theorem \cite[Thm.~6.19]{lee2018introduction} also carries over to Finsler geometry \cite[Thm.~6.6.1]{bao2000introduction}, similar statements are true also in the general case.
\end{remark}

\begin{remark}
Using Remark~\ref{rem:rotation_of_A}, the above results can easily be generalized to the same subsets $\tilde{\mA}_n$ of sectorial matrices.
In fact, a direct calculation shows that all the algebraic expressions in Theorem~\ref{thm:main_result} still hold in this case.
However, statements 1)-3) of Proposition~\ref{prop:properties_of_geo_dist} use the fact that if $A \in \mA_n$ then $A^{-1}, A^* \in \mA_n$. This is in general not true for other sets $\tilde{\mA}_n$.
\end{remark}

\section{An application and some related results}\label{sec:applications}

By construction, on the Finsler manifold $(\mA_n, F_{\mA_n}^{\Phi_1, \Phi_2, \Psi})$ the question ``given $A \in \mA_n$, which matrix $B \in \mP_n$ is closest to $A$'' have the answer ``$B = P^2$, where $A = PUP$ is the symmetric polar decomposition.'' Similarly, the corresponding question ``which matrix $B \in \scriptUA$ is closest to $A$'' have the answer ``$B = U$''.
In this section we consider an application of the distance on the Finsler manifold $(\mA_n, F_{\mA_n}^{\Phi_1, \Phi_2, \Psi})$ to another matrix approximation problem, namely finding the closest matrix of bounded log-rank, the definition of which is given in Section~\ref{subsec:logrank}.
Moreover, in Section~\ref{sec:geometric_mean} we consider the relation between the midpoint of geodesics on $(\mA_n, F_{\mA_n}^{\Phi_1, \Phi_2, \Psi})$ and the geometric mean of strictly accretive matrices as introduced in \cite{drury2015principal}.

\subsection{Closest matrix of bounded log-rank}\label{subsec:logrank}

For a positive definite matrix $P$, we defined the log-rank as the rank of the matrix logarithm of $P$. This is equivalent to the number of eigenvalues of $P$ that are different from $1$. We denote this $\logrank_{\mP_n}(\cdot)$. Analogously, for a unitary matrix $U$ the log-rank can be defined as the rank of the matrix logarithm of $U$, which is equivalent to the number of eigenvalues of $U$ with phase different from $0$. We denote this $\logrank_{\mU_n}(\cdot)$.
For strictly accretive matrices we define the log-rank as follows.

\begin{definition}\label{def:log-rank}
For $A \in \mA_n$ with symmetric polar decomposition $A = PUP$, we define the log-rank of $A$ as
\[
\logrank_{\mA_n}(A) := \max\{ \logrank_{\mP_n}(P^2),\; \logrank_{\mU_n}(U) \}.
\]
\end{definition}

We now consider the log-rank approximation problem: given $A \in \mA_n$ find $A_r \in \mA_n$, the latter with log-rank bounded by $r$, that is closest to $A$ in the geodesic distance $\delta_{\mA_n}^{\Phi_1, \Phi_2, \Psi}$. This can be formulated as the optimization problem  
\begin{subequations}\label{eq:log-rank}
\begin{align}
\inf_{A_r \in \mA_n} & \quad \delta_{\mA_n}^{\Phi_1, \Phi_2, \Psi}(A_r, A) \\
\text{subject to} & \quad \logrank_{\mA_n}(A_r) \leq r.
\end{align}
\end{subequations}

Let $A_r = P_r U_r P_r$ be the symmetric polar decomposition.
By properties i) - iv) in the Definition~\ref{def:prod_fun} of product functions $\Psi$, for each such function the distance is nondecreasing in each argument separately.
Therefore, by the form of the geodesic distance \eqref{eq:distance_on_A} and the definition of log-rank on $\mA_n$, it follows that \eqref{eq:log-rank} splits into two separate problems over $\mP_n$ and $\scriptUA$, namely
\begin{subequations}\label{eq:log-rank_P}
\begin{align}
\inf_{P_r \in \mP_n} & \quad \|\log (P_r^{-1} P^2 P_r^{-1} )\|_{\Phi_1} \label{eq:log-rank_P_cost} \\
\text{subject to} & \quad \logrank_{\mP_n}(P_r^2) \leq r,
\end{align}
\end{subequations}
and
\begin{subequations}\label{eq:log-rank_U}
\begin{align}
\inf_{U_r \in \scriptUA} & \quad \|\log(U_r^*U) \|_{\Phi_2} \\
\text{subject to} & \quad \logrank_{\mU_n}(U_r) \leq r.
\end{align}
\end{subequations}
In fact, this gives the following theorem.

\begin{theorem}
Assume that $\hat{P}_r$ and $\hat{U}_r$ are optimal solutions to \eqref{eq:log-rank_P} and \eqref{eq:log-rank_U}, respectively. Then an optimal solution to \eqref{eq:log-rank} is given by $\hat{A_r} = \hat{P}_r \hat{U}_r \hat{P}_r$. Conversely, if \eqref{eq:log-rank} has an optimal solution $\hat{A_r} = \hat{P}_r \hat{U}_r \hat{P}_r$, then $\hat{P}_r$ and $\hat{U}_r$ are optimal solutions to \eqref{eq:log-rank_P} and \eqref{eq:log-rank_U}, respectively.
\end{theorem}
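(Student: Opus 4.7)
The plan is to exploit the fact that, via the symmetric polar decomposition and the identification $\mA_n \cong \mP_n \times \scriptUA$ from Proposition~\ref{prop:smooth_varying_SPD}, both the feasible set and the objective of \eqref{eq:log-rank} decompose compatibly with the product-manifold structure, so that the problem splits cleanly into the two independent subproblems \eqref{eq:log-rank_P} and \eqref{eq:log-rank_U}.

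For the feasible set: by Definition~\ref{def:log-rank}, the constraint $\logrank_{\mA_n}(A_r) \leq r$ is, by virtue of the max, equivalent to the pair $\logrank_{\mP_n}(P_r^2) \leq r$ and $\logrank_{\mU_n}(U_r) \leq r$ holding simultaneously, where $A_r = P_r U_r P_r$ is the symmetric polar decomposition. Via Proposition~\ref{prop:smooth_varying_SPD}, the feasible set of \eqref{eq:log-rank} thus corresponds exactly to the Cartesian product of the feasible sets of the two subproblems. For the objective, formula \eqref{eq:distance_on_A} gives $\delta_{\mA_n}^{\Phi_1,\Phi_2,\Psi}(A_r, A)^2 = \Psi(d_P(P_r)^2, d_U(U_r)^2)$ with $d_P(P_r) := \|\log(P_r^{-1} P^2 P_r^{-1})\|_{\Phi_1}$ and $d_U(U_r) := \|\log(U_r^* U)\|_{\Phi_2}$, which are precisely the objectives of \eqref{eq:log-rank_P} and \eqref{eq:log-rank_U}. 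The key analytic step, and the one place real work is required, is to upgrade the paper's offhand ``nondecreasing'' to strictly increasing in each argument of $\Psi$: by property (ii) the map $t \mapsto \Psi(t, 0)$ is linear with slope $\Psi(1, 0) > 0$ (property (i)), so $\partial_{x_1}\Psi$ is strictly positive at $(1, 0)$; property (iv) together with continuity then force $\partial_{x_1}\Psi > 0$ on the entire connected set $\mR_+\times\mR_+\setminus\{(0,0)\}$, and the same argument applies to $\partial_{x_2}\Psi$.

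Given this strict monotonicity, the forward direction is immediate: for any feasible $A_r = P_r U_r P_r$ the optimality of $\hat{P}_r$ and $\hat{U}_r$ for their respective subproblems gives $d_P(P_r) \geq d_P(\hat{P}_r)$ and $d_U(U_r) \geq d_U(\hat{U}_r)$, hence $\Psi(d_P(P_r)^2, d_U(U_r)^2) \geq \Psi(d_P(\hat{P}_r)^2, d_U(\hat{U}_r)^2)$ with equality realized by $\hat{A}_r := \hat{P}_r \hat{U}_r \hat{P}_r$; by the uniqueness part of \cite[Thm.~3.1]{wang2019phases}, the symmetric polar decomposition of $\hat{A}_r$ is precisely $(\hat{P}_r, \hat{U}_r)$, so $\hat{A}_r$ is feasible for \eqref{eq:log-rank} and attains the infimum.

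For the converse, suppose $\hat{A}_r = \hat{P}_r \hat{U}_r \hat{P}_r$ is optimal for \eqref{eq:log-rank} but $\hat{P}_r$ were not optimal for \eqref{eq:log-rank_P}. Pick a feasible $P_r^\star$ with $d_P(P_r^\star) < d_P(\hat{P}_r)$ and set $A_r^\star := P_r^\star \hat{U}_r P_r^\star$. Then $A_r^\star$ is feasible for \eqref{eq:log-rank}, and strict monotonicity of $\Psi$ in its first argument yields $\delta_{\mA_n}^{\Phi_1,\Phi_2,\Psi}(A_r^\star, A) < \delta_{\mA_n}^{\Phi_1,\Phi_2,\Psi}(\hat{A}_r, A)$, contradicting the optimality of $\hat{A}_r$; the symmetric argument applied to $\hat{U}_r$ closes the case. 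The main (and really only nontrivial) obstacle is the verification of strict monotonicity of $\Psi$ from Definition~\ref{def:prod_fun}; once that is in hand, the rest is an essentially mechanical product-space separation argument.
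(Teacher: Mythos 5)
Your proof is correct and follows essentially the same route as the paper, which obtains the theorem from exactly this splitting: the feasible set decomposes via the max in Definition~\ref{def:log-rank} together with the uniqueness of the symmetric polar decomposition, and the objective decomposes via \eqref{eq:distance_on_A}, reducing \eqref{eq:log-rank} to \eqref{eq:log-rank_P} and \eqref{eq:log-rank_U} through monotonicity of $\Psi$. Your one genuine addition is the explicit upgrade from the paper's ``nondecreasing'' to strict monotonicity (sign-constancy of $\partial_{x_\ell}\Psi$ on the connected slit quadrant, pinned down by $\Psi(t,0)=t\,\Psi(1,0)>0$), which is precisely what the converse direction needs and which the paper leaves implicit.
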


In \cite[Thm.~3]{zhao2020low} it was shown that \eqref{eq:log-rank_U} always has an optimal solution, and that it
is the same for all symmetric gauge functions $\Phi_2$.
More precisely, the optimal solution
$\hat{U}_r$ is obtained from $U$ by setting the $n-r$ phases of $U$ with smallest absolute value equal to $0$. That is, let $U = V^*DV$ be a diagonalization of $U$ where $D = \diag([e^{i\tilde{\phi}_k(U)}]_{k = 1}^n)$ and where $[\tilde{\phi}_k(U)]_{k = 1}^n$ are the phases of $U$ ordered so that $|\tilde{\phi}_1(U)| \geq |\tilde{\phi}_2(U)| \geq \ldots \geq |\tilde{\phi}_n(U)|$. Then 
\[
\hat{U}_r = V^* \diag(e^{i\tilde{\phi}_1(U)}, \ldots e^{i\tilde{\phi}_r(U)}, 1, \ldots, 1) V
\]
is the optimal solution to \eqref{eq:log-rank_U}.
In the same spirit, the optimal solution to \eqref{eq:log-rank_P} can be charaterized as follows.

\begin{proposition}
Let $P \in \mP_n$, and let $P^2 = V^* \diag(\lambda_1, \ldots, \lambda_n) V$ be a diagonlization of $P^2$ where the eigenvalues are ordered so that $|\log(\lambda_1)| \geq |\log(\lambda_2)| \geq \ldots \geq |\log(\lambda_n)|$.
Then $\hat{P}_r^2 = V^* \diag(\lambda_1, \ldots, \lambda_r, 1, \ldots, 1) V$
is a minimizer to \eqref{eq:log-rank_P} for all symmetric gauge functions $\Phi_1$.
\end{proposition}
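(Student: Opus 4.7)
The plan is to transport the problem to the tangent space $\mH_n$ at the identity via the matrix logarithm, lower bound the geodesic objective by a Euclidean distance in $\mH_n$ through the exponential metric increasing (EMI) inequality, and then invoke Mirsky's classical low-rank approximation theorem. To set up, I would put $X := \log(P^2) \in \mH_n$ and $Y := \log(P_r^2) \in \mH_n$. Since the matrix logarithm is a diffeomorphism $\mP_n \to \mH_n$ and $\logrank_{\mP_n}$ is by definition the rank of the logarithm, the feasible set of \eqref{eq:log-rank_P} becomes $\{Y \in \mH_n : \mathrm{rank}(Y) \le r\}$ in these coordinates.

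The first main step is to apply EMI in unitarily invariant form: for every $A, B \in \mP_n$,
\[
\| \log A - \log B \|_{\Phi_1} \;\le\; \| \log(A^{-1/2} B A^{-1/2}) \|_{\Phi_1},
\]
with equality whenever $A$ and $B$ commute; this can be obtained from a weak (log-)majorization of $|\lambda(\log A - \log B)|$ by $|\lambda(\log(A^{-1/2} B A^{-1/2}))|$, cf.\ \cite[Chp.~6]{bhatia2007positive}. Specializing to $A = P_r^2$ and $B = P^2$ shows that the objective \eqref{eq:log-rank_P_cost} is bounded below by $\|X - Y\|_{\Phi_1}$. For the second step I would invoke Mirsky's theorem, which says that over all Hermitian $Y$ with $\mathrm{rank}(Y) \le r$ the quantity $\|X - Y\|_{\Phi_1} = \Phi_1(\sigma(X - Y))$ is minimized simultaneously for every unitarily invariant norm by retaining the $r$ eigenvalues of $X$ of largest absolute value and zeroing the remaining $n - r$. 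In the eigenbasis $V$ of $P^2$ this truncation is exactly $\hat Y := V^* \diag(\log \lambda_1, \ldots, \log \lambda_r, 0, \ldots, 0) V$, and hence $e^{\hat Y}$ matches the formula for $\hat P_r^2$ stated in the proposition.

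It then remains to verify that $\hat P_r$ actually attains the EMI lower bound. This will be immediate because $\hat P_r^2 = e^{\hat Y}$ and $P^2 = e^X$ are simultaneously diagonalized by $V$ and therefore commute, which is exactly the equality case in EMI. Consequently $\hat P_r$ is a minimizer of \eqref{eq:log-rank_P} for every smooth symmetric gauge function $\Phi_1$ at once. The main obstacle is really only locating and applying EMI in a form valid for every unitarily invariant norm; once that is in hand, the Mirsky step and the commutativity check are routine, and the norm-independence of the Mirsky minimizer is what lets a single $\hat P_r$ serve for all $\Phi_1$ simultaneously.
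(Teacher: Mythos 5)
Your proposal is correct, but it takes a genuinely different route from the paper. You lower-bound the objective by the matrix quantity $\|\log P^2 - \log P_r^2\|_{\Phi_1}$ via the generalized exponential metric increasing inequality (valid for every unitarily invariant norm, cf.\ Bhatia's treatment of the metrics $\delta_\Phi$ on $\mP_n$ and his 2003 LAA paper on the EMI property), minimize that bound over Hermitian matrices of rank at most $r$ by Mirsky's simultaneous Eckart--Young--Mirsky theorem, and then check attainment because $\hat P_r^2$ and $P^2$ commute. The paper instead stays at the level of eigenvalue vectors: it invokes a multiplicative Lidskii-type majorization (Marshall--Olkin, Thm.~9.H.1.f) to get $\log\lambda(P^2)^\downarrow - \log\lambda(P_r^2)^\downarrow \prec \log\lambda(P_r^{-1}P^2P_r^{-1})$, passes to absolute values and weak majorization, solves the resulting vector truncation problem in the $\prec_w$ preorder, and concludes by transitivity together with the Ky Fan/symmetric-gauge-function dominance principle; attainment for $\hat P_r$ is verified by the same commutativity observation you use. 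The two arguments are close relatives --- the generalized EMI is itself proved from the Araki/Thompson log-majorization, which is essentially the same tool as the paper's citation, and your matrix-level bound $\|X-Y\|_{\Phi_1}$ is in fact at least as tight as the paper's vector-level bound (by Lidskii) --- so your version buys a shorter, more conceptual argument at the price of two heavier black boxes (EMI and Mirsky), while the paper's is more self-contained within majorization theory. One cosmetic point: your conclusion restricts to smooth gauge functions, but nothing in your argument (or the statement) needs smoothness, so you can drop that qualifier.
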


\begin{proof}
Clearly, $\logrank_{\mP_n}(\hat{P}_r) = r$ and hence $\hat{P}_r$ is feasible to \eqref{eq:log-rank_P}. Next, $\|\log (P_r^{-1} P^2 P_r^{-1} )\|_{\Phi_1} = \Phi_1(\lambda(\log(P_r^{-1}P^2P_r^{-1})))$. Moreover, since $P_r^{-1}P^2P_r^{-1} \in \mP_n$ and hence is unitary diagonalizable and has positive eigenvalues, we have that $\lambda(\log(P_r^{-1}P^2P_r^{-1})) = \log(\lambda(P_r^{-1}P^2P_r^{-1}))$.
Now, 
to show that $\hat{P}_r$ is the minimizer to \eqref{eq:log-rank_P} for all symmetric gauge functions $\Phi_1$, it is equivalent to show that $|\log(\lambda(\hat{P}_r^{-1} P^2 \hat{P}_r^{-1}))|  \prec_w |\log(\lambda(P_r^{-1} P^2 P_r^{-1}))|$ for all $P_r$ such that $\logrank_{\mP_n}(P_r^2) \leq r$; see, e.g., \cite[Thm.~4]{fan1951maximum}, \cite[Thm.~1]{mirsky1960symmetric}, \cite[Sec.~3.5]{horn1994topics}, \cite[Prop.~4.B.6]{marshall2011inequalities}, \cite[Thm.~10.35 ]{zhang2011matrix}. 

To this end, using \cite[Thm.~9.H.1.f]{marshall2011inequalities} (or \cite[Cor~III.4.6 ]{bhatia1997matrix}, \cite[Thm.~10.30 ]{zhang2011matrix}) we have that $\log(\lambda(P^2))^\downarrow - \log(\lambda(P_r^2))^\downarrow \prec \log(\lambda(P_r^{-1}P^2P_r^{-1}))$,%
\footnote{To see this, take $V = P_r^{-1}P^2P_r^{-1} \in \mP_n$ and $U = P_r^2$ in \cite[Thm.~9.H.1.f]{marshall2011inequalities}, and use the fact that the eigenvalues of $UV = P_rP^2P_r^{-1}$ are invariant under the similarity transform $P_r^{-1} \cdot P_r$.}
and by \cite[Ex~11.3.5]{bhatia1997matrix} we therefore have that $|\log(\lambda(P^2))^\downarrow - \log(\lambda(P_r^2))^\downarrow| \prec_w |\log(\lambda(P_r^{-1}P^2P_r^{-1}))|$.
Moreover,
by a direct calculation it can be verified that 
$\log(\lambda(P^2))^\downarrow - \log(\lambda(\hat{P}_r^2))^\downarrow = \log(\lambda(\hat{P}_r^{-1} P^2 \hat{P}_r^{-1}))^\downarrow$ holds.
Therefore, if we can show that
\begin{equation}\label{eq:proof_eq1}
\begin{aligned}
\text{$|\log(\lambda(P^2))^\downarrow - \log(\lambda(\hat{P}_r^{2}))^\downarrow| \prec_w |\log(\lambda(P^2))^\downarrow - \log(\lambda(P_r^{2}))^\downarrow|$}&\\
\text{for all for all $P_r$ such that $\logrank_{\mP_n}(P_r^2) \leq r$} & ,
\end{aligned}
\end{equation}
we would have that for all such $P_r$,
\begin{align*}
|\log(\lambda(\hat{P}_r^{-1} P^2\hat{P}_r^{-1}))^\downarrow| & = |\log(\lambda(P^2))^{\downarrow} - \log(\lambda(\hat{P}_r^{2}))^\downarrow| \prec_w |\log(\lambda(P^2))^\downarrow - \log(\lambda(P_r^{2}))^\downarrow| \\
&  \prec_w |\log(\lambda(P_r^{-1} P^2 P_r^{-1}))|
\end{align*}
and by transitivity of preorders the result follows.
To show \eqref{eq:proof_eq1}, we formulate the following equivalent optimization problem: let $a = \log(\lambda(P^2))^{\downarrow}$
and consider
\begin{align*}
\min_{\prec_w} & \quad  |a - x| \\
\text{subject to} & \quad  x \in \mR^n, \quad x_1 \geq x_2 \geq \ldots \geq x_n\\
& \quad \text{at most } r \text{ elements of } x \text{ are nonzero},
\end{align*}
where $\min_{\prec_w}$ is minimizing with respect to the preordering $\prec_w$.
The solution to the latter is to take $x_i = a_i$ for the $r$ elements of $a$ with largest absolute value.
\end{proof}

\subsection{On the geometric mean for strictly accretive matrices}\label{sec:geometric_mean}
The geometric mean of strictly accretive matrices, denoted by $A\#B$, was introduced in \cite{drury2015principal} as a generalization of the geometric mean for positive definite matrices \cite[Chp.~4 and 6]{bhatia2007positive}, \cite{moakher2005differential}. In particular, in \cite{drury2015principal} it was shown that for $A,B \in \mA_n$ there is a unique solution $G \in \mA_n$ to the equation $GA^{-1}G = B$. This solution is given explicitly as 
\[
A\#B := G = A^{1/2}(A^{-1/2}BA^{-1/2})^{1/2}A^{1/2},
\]
which is also the same algebraic expression as for the geometric mean of positive definite matrices.

The geometric mean for positive definite matrices can also be interpreted as the midpoint on the geodesic connecting 
the matrices
\cite[Sec.~6.1.7]{bhatia2007positive}.
With the Finsler geometry $(\mA_n, F_{\mA_n}^{\Phi_1, \Phi_2, \Psi})$, we can therefore get an alternative definition of the geometric mean between strictly accretive matrices as the geodesic midpoint.
However,
for $A, B \in (\mA_n, F_{\mA_n}^{\Phi_1, \Phi_2, \Psi})$ we
in general have that $\gamma_{\mA_n}(1/2) \neq A \# B$. This can be seen by the following simple example.

\begin{example}
Let $A = I$ and let $B = PUP \in  (\mA_n, F_{\mA_n}^{\Phi_1, \Phi_2, \Psi})$. Then we have that $A \# B = B^{1/2} = (PUP)^{1/2}$ and $\gamma_{\mA_n}(1/2) = P^{1/2}U^{1/2}P^{1/2}$. Thus, in general $A \# B \neq \gamma_{\mA_n}(1/2)$. In fact, equality holds in this case if and only if $P$ and $U$ commute.
\end{example}

Instead, the midpoint of the geodesic between two matrices $A,B \in  (\mA_n, F_{\mA_n}^{\Phi_1, \Phi_2, \Psi})$ can be expressed using the geometric mean $\#$ as 
\begin{equation}\label{eq:geo_mean_and_midpoint}
\gamma_{\mA_n}(1/2) = (P_A^2 \# P_B^2)^{1/2} \, (U_A\#U_B) \, (P_A^2 \# P_B^2)^{1/2},
\end{equation}
which follows directly from \eqref{eq:geodesics_on_A}. Using this representation, we can characterize when $A \# B = \gamma(1/2)$. In order to do so, we first need the following two auxiliary result.

\begin{lemma}\label{lem:normal_A_polar_decomp}
Let $A \in \mD_n$, and let $A = VQ$ be is polar decomposition, where $V \in \mU_n$ and $Q \in \mP_n$. $A$ is normal if and only if $A = Q^{1/2}VQ^{1/2}$ is the symmetric polar decomposition of $A$.
\end{lemma}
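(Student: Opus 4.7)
The plan is to reduce the lemma to the standard fact that for the (ordinary) polar decomposition $A = VQ$, the matrix $A$ is normal if and only if $V$ and $Q$ commute (cf.\ \cite[Thm.~9.1]{zhang2011matrix}, which is already invoked elsewhere in the paper). Given this, I only need to pass between commutativity with $Q$ and commutativity with $Q^{1/2}$ (via the functional calculus, since $Q^{1/2}$ is a polynomial in $Q$) and to invoke the uniqueness of the symmetric polar decomposition from \cite[Thm.~3.1]{wang2019phases}.

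For the forward direction, assume $A$ is normal. Then $V$ and $Q$ commute, hence so do $V$ and $Q^{1/2}$, which gives
\[
A = VQ = V Q^{1/2} Q^{1/2} = Q^{1/2} V Q^{1/2}.
\]
I then need to verify that this candidate factorization is the symmetric polar decomposition, i.e.\ that $V \in \scriptU$. This follows because $A = Q^{1/2} V Q^{1/2}$ is a congruence with $Q^{1/2} \in \mP_n$ invertible, so for any nonzero $x$, setting $y = Q^{1/2} x$ gives $x^* A x = y^* V y$; hence $0 \in W(A)$ iff $0 \in W(V)$, so $A \in \mD_n$ forces $V \in \mU_n \cap \mD_n = \scriptU$. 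By uniqueness of the symmetric polar decomposition, $A = Q^{1/2} V Q^{1/2}$ is \emph{the} symmetric polar decomposition.

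For the converse, assume $A = Q^{1/2} V Q^{1/2}$ is the symmetric polar decomposition of $A$. Combining with $A = VQ$ yields
\[
V Q = Q^{1/2} V Q^{1/2}.
\]
Multiplying on the right by $Q^{-1/2}$ gives $V Q^{1/2} = Q^{1/2} V$, so $V$ and $Q^{1/2}$ commute; squaring yields that $V$ and $Q$ commute, and the aforementioned characterization of normality via the polar decomposition implies that $A$ is normal.

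The only mildly delicate point is the verification that $V \in \scriptU$ in the forward direction, since the symmetric polar decomposition is only defined for sectorial matrices and its uniqueness is exploited to conclude. The rest is routine functional calculus and the standard commutativity criterion for normal matrices in terms of their polar factors.
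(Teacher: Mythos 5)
Your proof is correct and takes essentially the same route as the paper's: reduce normality to commutation of the polar factors $V$ and $Q$ (hence of $V$ and $Q^{1/2}$), and conclude via the uniqueness of the symmetric polar decomposition. The only difference is that where the paper cites \cite[Lem.~9]{horn1959eigenvalues} to see that the unitary polar factor $V$ is sectorial, you verify this directly through the congruence $x^*Ax = (Q^{1/2}x)^*V(Q^{1/2}x)$, which is an equally valid and more self-contained justification of that step.
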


\begin{proof}
First, using \cite[Lem.~9]{horn1959eigenvalues} we conclude that since $A$ is sectorial, $V$ is also sectorial. Now, $A$ is normal if and only if $V$ and $Q$ commute \cite[Thm.~9.1]{zhang2011matrix}, which is true if and only if $V$ and $Q^{1/2}$ commute. Hence $A$ is normal if and only if $A = VQ = Q^{1/2}VQ^{1/2}$, and by the existence and uniqueness of the symmetric polar decomposition the result follows.
\end{proof}

\begin{lemma}\label{lem:geo_mean_congruence}
Let $A,B \in \mA_n$ and $G = A\#B$. For all $X \in \gln$, the unique strictly accretive solution to 
\[
H(X^*AX)^{-1}H = X^*BX
\]
is $H = X^{*}GX$.
\end{lemma}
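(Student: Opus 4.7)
The plan is to reduce the equation $H(X^*AX)^{-1}H = X^*BX$ to the defining equation of Drury's geometric mean $GA^{-1}G=B$ by an obvious congruence substitution, and then invoke the uniqueness statement already established in \cite{drury2015principal}.

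First, I would verify that $H = X^*GX$ is indeed a solution. A direct computation gives
\[
X^*GX \cdot (X^*AX)^{-1} \cdot X^*GX \;=\; X^*G \,(X^{-1}A^{-1}X^{-*})\, GX \cdot X^{-*}X^* \;=\; X^*GA^{-1}GX \;=\; X^*BX,
\]
using $GA^{-1}G = B$. I would also note in passing that $X^*GX$ is strictly accretive, since $H(X^*GX)= X^*H(G)X$ and $H(G) \in \mP_n$ together with $X\in\gln$ force $X^*H(G)X\in\mP_n$; so $X^*GX$ is a legitimate candidate in $\mA_n$.

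For uniqueness, suppose $H\in\mA_n$ is any strictly accretive solution. Set $\tilde G := X^{-*}HX^{-1}$. Then $\tilde G\in\mA_n$ by the same Hermitian-part computation as above (applied to $X^{-1}$ in place of $X$), and
\[
\tilde G A^{-1}\tilde G \;=\; X^{-*}HX^{-1}\,A^{-1}\,X^{-*}HX^{-1} \;=\; X^{-*}\,H\,(X^*AX)^{-1}\,H\,X^{-1} \;=\; X^{-*}(X^*BX)X^{-1} \;=\; B.
\]
Thus $\tilde G$ is a strictly accretive solution of $\tilde G A^{-1}\tilde G = B$, and by Drury's uniqueness result \cite{drury2015principal} we must have $\tilde G = A\#B = G$, i.e.\ $H = X^*GX$.

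This is essentially a bookkeeping argument; there is no real obstacle, since the hard analytic content (existence and uniqueness of $A\#B$ in $\mA_n$) is already provided by \cite{drury2015principal}. The only item worth flagging explicitly is the preservation of strict accretivity under the $*$-congruence by $X^{-1}$, which is needed to legitimately invoke that uniqueness statement; this is immediate from $H(Y^*MY) = Y^*H(M)Y$ for all $M\in\mM_n$ and $Y\in\gln$.
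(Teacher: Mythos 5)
Your proposal is correct and follows essentially the same route as the paper: verify $X^*GX$ solves the equation by direct substitution, and obtain uniqueness from Drury's uniqueness of the strictly accretive geometric mean combined with the fact that $*$-congruence by an invertible matrix preserves strict accretivity. The only cosmetic difference is that you pull an arbitrary solution $H$ back via $X^{-1}$ to the pair $(A,B)$, whereas the paper applies Drury's uniqueness directly to the transformed pair $(X^*AX, X^*BX)$; these are trivially equivalent.
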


\begin{proof}
That $H = X^{*}GX$ solves the equations is easily verified by simply plugging it in. Moreover, that $H$ is unique follows from the uniqueness of the  geometric mean for strictly accretive matrices \cite[Sec.~3]{drury2015principal} and the fact that for any $X \in \gln$ we have that $X^*AX, X^*BX \in \mA_n$.
\end{proof}

\begin{proposition}\label{prop:geomean_and_midpoint}
For $A, B \in  (\mA_n, F_{\mA_n}^{\Phi_1, \Phi_2, \Psi})$, let $A = P_A U_A P_A$ and $B = P_B U_B P_B$ be the corresponding symmetric polar decompositions. We have that $A \# B = \gamma_{\mA_n}(1/2)$
if one of the following holds:
\begin{enumerate}[i)]
\item $U_A = U_B = I$,
\item $P_A = P_B$,
\item $A$ and $B$ are commuting normal matrices.
\end{enumerate}
\end{proposition}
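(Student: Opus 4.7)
The starting point is the representation \eqref{eq:geo_mean_and_midpoint}, namely
\[
\gamma_{\mA_n}(1/2) = (P_A^2 \# P_B^2)^{1/2} \, (U_A \# U_B) \, (P_A^2 \# P_B^2)^{1/2},
\]
so in each of the three cases I would simplify this expression and compare it to $A\#B = A^{1/2}(A^{-1/2} B A^{-1/2})^{1/2} A^{1/2}$.

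Case i) is immediate: with $U_A = U_B = I$ we have $A = P_A^2$, $B = P_B^2$, and $U_A \# U_B = I$, so $\gamma_{\mA_n}(1/2) = P_A^2 \# P_B^2 = A \# B$.

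Case ii) follows from Lemma~\ref{lem:geo_mean_congruence}. With $P_A = P_B =: P \in \gln$, we have $A = P^* U_A P$ and $B = P^* U_B P$, so applying that lemma with $X = P$ to the pair $(U_A, U_B)$ yields
\[
A \# B = P \, (U_A \# U_B) \, P.
\]
On the other hand, $P_A^2 \# P_B^2 = P^2$, hence $\gamma_{\mA_n}(1/2) = P\,(U_A \# U_B)\,P$ as well.

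Case iii) is the main technical step and requires simultaneous diagonalization. Since $A$ and $B$ are commuting normal matrices, there is a unitary $W$ and diagonal $D_A = \diag(d_k^A)$, $D_B = \diag(d_k^B)$, with $d_k^A, d_k^B$ in the open right half plane, such that $A = W D_A W^*$ and $B = W D_B W^*$. Writing $d_k^A = r_k^A e^{i\theta_k^A}$ with $\theta_k^A \in (-\pi/2,\pi/2)$ (and similarly for $B$), the polar decomposition of $A$ is $A = V_A Q_A$ with $V_A = W \diag(e^{i\theta_k^A}) W^*$ and $Q_A = W \diag(r_k^A) W^*$; Lemma~\ref{lem:normal_A_polar_decomp} then identifies the symmetric polar factors as $P_A = Q_A^{1/2}$ and $U_A = V_A$, and analogously for $B$. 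Because $P_A^2, P_B^2, U_A, U_B$ are all diagonalized by $W$, and the geometric mean of two commuting positive definite (resp.\ strictly accretive unitary diagonalized in a common basis) matrices is computed entrywise through the principal branch, a direct computation gives
\[
\gamma_{\mA_n}(1/2) = W \diag\!\left( (r_k^A r_k^B)^{1/2}\, e^{i(\theta_k^A+\theta_k^B)/2} \right) W^*.
\]
The same expression arises from $A\#B$: since $A$ and $B$ commute, $A\#B = (AB)^{1/2}$ (principal square root), and $AB = W \diag(d_k^A d_k^B) W^*$ with $\arg(d_k^A d_k^B) = \theta_k^A + \theta_k^B \in (-\pi,\pi)$, so the principal square root yields exactly the same diagonal. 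The main obstacle here is making sure the branches of the square roots in the definitions of $A\#B$, $U_A\#U_B$, and $P_A^2\#P_B^2$ are all consistent, which is handled by the fact that each relevant argument stays strictly inside $(-\pi,\pi)$ (resp.\ $(-\pi/2,\pi/2)$) thanks to strict accretivity; the remaining verification is then a routine calculation.
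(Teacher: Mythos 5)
Your proposal is correct and follows essentially the same route as the paper: case i) directly from \eqref{eq:geo_mean_and_midpoint}, case ii) via Lemma~\ref{lem:geo_mean_congruence} with $X=P$, and case iii) via Lemma~\ref{lem:normal_A_polar_decomp} together with simultaneous unitary diagonalization of the commuting normal matrices. The only difference is that you carry out the ``direct calculation'' for case iii) explicitly at the eigenvalue level (including the branch-of-logarithm check), which the paper leaves implicit.
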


\begin{proof}
Using \eqref{eq:geo_mean_and_midpoint}, the first statement follows immediately.
To prove the second statement, let $A = PU_AP$ and $B = PU_BP$. From \eqref{eq:geo_mean_and_midpoint} it therefor follows that $\gamma_{\mA_n}(1/2) = P (U_A\#U_B) P$. Using Lemma~\ref{lem:geo_mean_congruence} with $G = U_A \# U_B$ and $X = P$, where therefore have that
\[
A\#B = (P U_A P)\# (P U_B P) = P (U_A\#U_B) P = \gamma_{\mA_n}(1/2).
\]

To prove the third statement, by Lemma~\ref{lem:normal_A_polar_decomp} we have that $P_A$, $U_A$, and $P_B$, $U_B$ commute. Moreover, since commuting normal matrices are simultaneously unitarilty diagonalizable \cite[Thm.~2.5.5]{horn2013matrix}, and since a unitary diagonlization is unique up to permutation of the eigenvalues and eigenvectors, it follows that $P_A, U_A, P_B$ and $U_B$ all commute. Using this together with \eqref{eq:geo_mean_and_midpoint}, a  direct calculation gives the result.
\end{proof}

As noted in the above proof, if $A$ and $B$ are normal and commute they are also simultaneously unitarily diagonalizable \cite[Thm.~2.5.5]{horn2013matrix}, i.e., $A = V^* \Lambda_A V$ and $B = V^* \Lambda_B V$ for some $V \in \mU_n$. In this case, using Lemma~\ref{lem:geo_mean_congruence} we have that $A \# B = V^* (\Lambda_A \# \Lambda_B) V$, and the geometric mean between $A$ and $B$ can thus be interpreted as the (independent) geometric mean between the corresponding pairs of eigenvalues.
In fact, the latter observation can be generalized to all pairs of matrices that can be simultaneously diagonalized by congruence, albeit that the elements of the diagonal matrices are not necessarily eigenvalues in this case (cf.~Proposition~\ref{prop:new_form_curve_P}).

\begin{proposition}\label{prop:geodesics_and_geometric_mean}
Let $A,B \in \mA_n$ and assume that $A = T^* D_A T$ and $B = T^* D_B T$, where $T \in \gln$ and where $D_A$ and $D_B$ are diagonal matrices. 
Then $A \# B = T^* (D_A \# D_B) T$.
\end{proposition}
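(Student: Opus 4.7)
The strategy is to deduce the proposition as a direct application of Lemma~\ref{lem:geo_mean_congruence}, with the roles of the matrices swapped: rather than pushing forward a known geometric mean through a congruence, I would pull back the geometric mean of $A$ and $B$ to the diagonal representatives $D_A$ and $D_B$, and use uniqueness of the accretive solution of $G A^{-1} G = B$ to identify $A \# B$.

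First I would verify that $D_A, D_B \in \mA_n$, so that the geometric mean $D_A \# D_B$ is well-defined by \cite{drury2015principal}. From $A = T^* D_A T$ we get $D_A = T^{-*} A T^{-1}$, hence $H(D_A) = T^{-*} H(A) T^{-1}$, which is positive definite since $H(A) \succ 0$ and $T^{-1} \in \gln$; the same holds for $D_B$. Thus $G_D := D_A \# D_B \in \mA_n$ exists and is the unique strictly accretive solution of $G_D D_A^{-1} G_D = D_B$.

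Next, I would apply Lemma~\ref{lem:geo_mean_congruence} with the lemma's matrices $A, B$ replaced by $D_A, D_B$ and with $X := T$. The lemma then states that $H := T^* G_D T = T^* (D_A \# D_B) T$ is the unique strictly accretive solution of
\[
H (T^* D_A T)^{-1} H = T^* D_B T, \qquad \text{i.e.,} \qquad H A^{-1} H = B.
\]
But this latter equation, together with $H \in \mA_n$, is precisely the defining property of the geometric mean $A \# B$ in \cite[Sec.~3]{drury2015principal}. By uniqueness of this solution we conclude $A \# B = T^*(D_A \# D_B) T$, as claimed.

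There is essentially no obstacle here: the entire argument is the observation that congruence commutes with the geometric mean, and Lemma~\ref{lem:geo_mean_congruence} already encodes exactly this fact (its hypothesis on $D_A, D_B$ being diagonal is in fact never used). The only non-cosmetic check is confirming the accretivity of $D_A, D_B$ so that the right-hand side makes sense.
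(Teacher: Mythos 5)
Your proof is correct and takes essentially the same route as the paper: a direct application of Lemma~\ref{lem:geo_mean_congruence} with $G = D_A \# D_B$ and $X = T$, followed by uniqueness of the strictly accretive solution of $G A^{-1} G = B$. Your explicit verification that $D_A, D_B \in \mA_n$ (accretivity is preserved under congruence) is a detail the paper leaves implicit, but the argument is otherwise identical.
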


\begin{proof}%[{Proof of Proposition~\ref{prop:geodesics_and_geometric_mean}}]
Let $A, B \in \mA_n$ and assume that $A = T^*D_AT$ and $B = T^* D_B T$, where $T \in \gln$ and where $D_A$ and $D_B$ are diagonal matrices. A direct application of Lemma~\ref{lem:geo_mean_congruence}, with $G = D_A \# D_B$ and $X = T$, gives the result.
\end{proof}

As a final remark, note that if $D_A$ and $D_B$ in Proposition~\ref{prop:geodesics_and_geometric_mean} are unitary, then $A = T^* D_A T$ and $B = T^* D_B T$ are sectorial decompositions of $A$ and $B$, respectively.
Now, let $T = VP$ be the polar decomposition of $T$, with $V \in \mU_n$ and $P \in \mP_n$.
Hence we have that $A = PV^*D_AVP = PU_AP$ and $B = PV^*D_BVP = PU_BP$, i.e., $P$ is the positive definite part and $V^*D_AV$ and $V^*D_BV$ are the strictly accretive unitary part in the symmetric polar decomposition of $A$ and $B$, respectively. From Proposition~\ref{prop:geomean_and_midpoint}.ii) we therefore have that $A \# B = \gamma_{\mA_n}(1/2)$ in this case.

\section{Conclusions}
In this work we show that the set of strictly accretive matrices is a smooth manifold that is diffeomorphic to a direct product of the smooth manifold of positive definite matrices and the smooth manifold of strictly accretive unitary matrices. Using this decomposition, we introduced a family of Finsler metrics and studied their geodesics and geodesic distances. Finally, we consider the matrix approximation problem of finding the closest strictly accretive matrix of bounded log-rank, and also discuss the relation between the geodesic midpoint and the previously introduced geometric mean between accretive matrices.

There are several interesting ways in which these results can be extended. For example, in the case of positive definite matrices the geometric framework offered by the Riemannian manifold construction gives yet another interpretation of the geometric mean. In fact, the geometric mean between two positive definite matrices $A$ and $B$ is also the (unique) solution to the variational problem $\min_{G \in \mP_n} \delta_{\mP_n}(A, G)^2 + \delta_{\mP_n}(B,G)^2$ \cite[Sec.~6.2.8]{bhatia2007positive}, \cite[Prop.~3.5]{moakher2005differential},  and this interpretation can be used to extend the geometric mean to a mean between several matrices \cite{moakher2005differential}. In a similar way, a geometric mean between the strictly accretive matrices $A_1, \ldots, A_N$ can be defined as the solution to
\[
\min_{G \in \mA_n} \; \sum_{i = 1}^N  \delta_{\mA_n}^{\Phi_1, \Phi_2, \Psi}(A_i, G)^2,
\]
however such a generalization would need more investigation. For example, even in the case of the Riemannian metric on the manifold of positive definite matrices, analytically computing the geometric mean between several matrices is nontrivial \cite[Prop.~3.4]{moakher2005differential}. Nevertheless, there are efficient numerical algorithms for solving the latter problem,
see, e.g., the survey \cite{jeuris2012survey} or the monograph \cite{absil2008optimization} and references therein.

The idea of this work was to introduce a metric that separates the ``magnitudes'' and the ``phases'' of strictly accretive matrices.
However, the similarities between the manifold of positive definite matrices and the manifold of unitary matrices raises a question about another potential geometry on $\mA_n$ that does not explicitly use the product structure. More precisely, note that since all strictly accretive matrices have a unique, strictly accretive square root, the inner product on the tangent space $T_U\scriptUA$, given by \eqref{eq:inner_product_U}, can be defined analogously to the one on $\mP_n$, given by \eqref{eq:inner_product_P}, namely as
\[
\langle X,Y \rangle_{U} = \tr((U^{-1/2} X U^{-1/2})^*(U^{-1/2} Y U^{-1/2})) = \tr(X^*Y),
\]
for $U \in \scriptUA$ and $X,Y \in T_U\scriptUA$.
Based on the similarities between the inner products, and the corresponding geodesics and geodesic distances, we ask the following question: for $A, B \in \mA_n$ and $X, Y \in T_A \mA_n$, if we define the inner product on
$T_A\mA_n$ as 
\[
\langle X, Y \rangle_{A} = \tr((A^{-1/2} X A^{-1/2})^*(A^{-1/2} Y A^{-1/2})),
\]
what is the form of the geodesics and the geodesic distance?

\section*{Acknowledgments}
The authors would like to thank Wei~Chen, Dan~Wang, Xin~Mao, Di~Zhao, and Chao~Chen for valuable discussions.

\appendix

\begin{appendices}
\section{Technical results from Section~\ref{sec:A_smooth_manifold}}\label{app:technical_results_one}
The following is a number of lemmata use in the proofs of Theorem~\ref{thm:A_smooth_manifold} and \ref{thm:decomp_manifold}.

\begin{lemma}\label{lem:open_A}
$\mA_n$ is an open sets in $\gln$.
\end{lemma}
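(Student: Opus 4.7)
The plan is short and topological. I would first observe that $\mA_n \subseteq \gln$: if $A \in \mA_n$ and $Ax = 0$ for some $x \in \mC^n$, then $x^*H(A)x = \Re(x^*Ax) = 0$, which forces $x = 0$ since $H(A) \in \mP_n$. Combined with the fact that $\gln$ is itself open in $\mM_n$, it therefore suffices to show that $\mA_n$ is open in $\mM_n$; openness in $\gln$ is then automatic.

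For openness in $\mM_n$, I would exhibit $\mA_n$ as the preimage of an open set under a continuous map. Specifically, the Hermitian-part map
\[
H : \mM_n \to \mH_n, \qquad A \mapsto \tfrac{1}{2}(A + A^*),
\]
is $\mR$-linear and hence continuous, and by definition $\mA_n = H^{-1}(\mP_n)$. So the result reduces to the standard fact that $\mP_n$ is open in $\mH_n$.

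That last fact I would either cite or justify in one line via continuity of eigenvalues: for $A \in \mP_n$ the minimum eigenvalue $\lambda_{\min}(A) > 0$, and $\lambda_{\min}$ is a continuous function on $\mH_n$, so it remains positive on a neighborhood of $A$; equivalently, one can invoke Sylvester's criterion since the leading principal minors depend continuously on the entries. Either way, there is no genuine obstacle — the only content is the inclusion $\mA_n \subseteq \gln$, which is the first observation above, and the rest is a routine preimage-of-open-under-continuous argument.
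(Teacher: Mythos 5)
Your proposal is correct and follows essentially the same route as the paper: both arguments boil down to the linearity (hence continuity) of the Hermitian-part decomposition together with the openness of $\mP_n$ in $\mH_n$, the paper phrasing this as ``$\mP_n \oplus \mS_n$ is open'' and you phrasing it as $\mA_n = H^{-1}(\mP_n)$ being the preimage of an open set under a continuous map. Your explicit verification that $\mA_n \subseteq \gln$ (via $\Re(x^*Ax) > 0$ for $x \neq 0$) is a welcome addition that the paper's proof leaves implicit.
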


\begin{proof}
To show that $\mA_n$ is open in $\gln$, 
note that since $\mH_n \perp \mS_n$, cf.~\cite[Thm.~10.B.1 and 10.B.2]{marshall2011inequalities}, \cite[Prob.~10.7.20]{zhang2011matrix}, we have that $A + B = H(A + B) + S(A + B)$. Moreover, $H(A + B) = H(A) + H(B)$, and since the set $\mP_n$ is open in $\mH_n$, $\mP_n \oplus \mS_n$ is open in $\gln$.
\end{proof}

\begin{lemma}\label{lem:connected}
$\mA_n$is connected.
\end{lemma}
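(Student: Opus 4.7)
The plan is to prove connectedness by establishing the stronger property that $\mA_n$ is convex, which immediately implies path-connectedness and hence connectedness.

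First I would observe that the map $A \mapsto H(A) = \tfrac{1}{2}(A + A^*)$ is $\mR$-linear from $\mM_n$ to $\mH_n$. The set $\mA_n$ is by definition the preimage of $\mP_n$ under this map (intersected with $\mM_n$, but of course $H(A) \succ 0$ already forces $A \in \gln$, since any $x$ with $Ax = 0$ gives $x^* H(A) x = 0$). Since $\mP_n$ is a convex subset of $\mH_n$, and the preimage of a convex set under a linear map is convex, $\mA_n$ is convex in $\mM_n$. Concretely, for $A, B \in \mA_n$ and $t \in [0,1]$, $H(tA + (1-t)B) = tH(A) + (1-t)H(B) \in \mP_n$, so $tA + (1-t)B \in \mA_n$.

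From convexity, for any $A, B \in \mA_n$ the straight-line segment $\gamma(t) = tA + (1-t)B$ is a continuous path in $\mA_n$ joining $B$ to $A$, so $\mA_n$ is path-connected and therefore connected.

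There is no real obstacle here: the only thing one has to notice is that the definition of $\mA_n$ is stable under convex combinations because positive definiteness of the Hermitian part is. Alternatively, one could invoke the decomposition $\mA_n = \mP_n \oplus \mS_n$ already used in the proof of Lemma~\ref{lem:open_A} (where the addition is via the Toeplitz decomposition $A = H(A) + S(A)$), and note that $\mP_n$ is connected as a convex subset of $\mH_n$ while $\mS_n$ is a real vector space, so their direct sum is connected as the continuous image of the connected product $\mP_n \times \mS_n$. Either route gives a one-line proof.
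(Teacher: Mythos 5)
Your proof is correct, but it takes a genuinely different route from the paper. You observe that $\mA_n$ is convex: $H$ is real-linear on $\mM_n$, $\mP_n$ is convex in $\mH_n$, so $H(tA+(1-t)B)=tH(A)+(1-t)H(B)\succ 0$ for $A,B\in\mA_n$, $t\in[0,1]$, and the straight segment stays in $\mA_n$; path-connectedness and hence connectedness follow at once (your aside that $H(A)\succ 0$ forces invertibility is also correct, since $Ax=0$ gives $x^*H(A)x=0$). The paper instead builds an explicit piecewise-smooth path to $I$ through the sectorial decomposition $A=T^*DT$, first shrinking the unitary diagonal factor $D^{1-2t}\to I$ and then flowing $(T^*T)^{2-2t}\to I$, a construction in the spirit of the magnitude/phase splitting used throughout the paper. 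Your argument is shorter and proves more (convexity gives contractibility, not just connectedness), while the paper's path is compatible with its later geometric decomposition but is not needed for the bare statement; your alternative via $\mA_n=\mP_n\oplus\mS_n$ as the continuous image of the connected product $\mP_n\times\mS_n$ is equally valid.
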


\begin{proof}
By \cite[Prop.~1.11]{lee2013introduction}, $\mA_n$ is connected if and only if it is path-connected. To show the latter, it suffices to show that any $A \in \mA_n$ is path-connected to $I$. To this end, let $A = T^* D T$ be the sectorial decomposition of $A$.
A piece-wise smooth path connecting $A$ and $I$ is given by
\[
\gamma(t) := 
\begin{cases}
T^* D^{1-2t} T, & \text{for } t \in [0, 1/2) \\
(T^* T)^{2 - 2t}, & \text{for } t \in [1/2, 1) \\
I, & \text{for } t = 1,
\end{cases}
\]
and hence $\mA_n$ is connected.
\end{proof}

\begin{lemma}\label{lem:tangent_space}
The tangent space at an $A \in \mA_n$ is given by $T_A\mA_n = \mM_n$.
\end{lemma}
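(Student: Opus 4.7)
The plan is very short: use the fact that the tangent space at a point of an open submanifold of a vector space is canonically identified with the ambient vector space.

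First I would observe that $\mM_n$ is a (finite-dimensional real) vector space, and therefore carries a canonical smooth manifold structure for which the tangent space $T_A\mM_n$ at any point $A \in \mM_n$ is canonically isomorphic to $\mM_n$ itself (see, e.g., \cite[Ex.~3.8]{lee2013introduction}). Next, by Lemma~\ref{lem:open_A}, $\mA_n$ is open in $\gln$, and $\gln$ is in turn open in $\mM_n$; hence $\mA_n$ is open in $\mM_n$. Being an open subset of a smooth manifold, $\mA_n$ inherits the structure of an open submanifold, and at any point $A \in \mA_n$ the inclusion induces a canonical identification $T_A\mA_n \cong T_A\mM_n \cong \mM_n$ (cf.~\cite[Prop.~3.9]{lee2013introduction}). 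This gives the claim.

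There is essentially no obstacle here: the entire content of the lemma is that $\mA_n$ is an open subset of $\mM_n$, which has already been established. The only thing to take care of is to invoke the open-submanifold identification of tangent spaces rather than redoing it from scratch in coordinates.
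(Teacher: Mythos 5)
Your argument is correct and is essentially the paper's own proof: the paper also disposes of this lemma in one line by noting that $\mA_n$ is an open subset of $\mM_n$, so the tangent space at any point is canonically $\mM_n$. Your version merely spells out the chain $\mA_n \subset \gln \subset \mM_n$ of open inclusions and the standard open-submanifold identification, which is a fine (slightly more explicit) rendering of the same reasoning.
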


\begin{proof}
This follows since $\mA_n$ is an open subset of $\mM_n$.
\end{proof}

\begin{lemma}\label{lem:script_U_manif}
$\scriptUA$ is a connected smooth manifold  and at a point $U \in \scriptUA$ the tangent space is $T_U\scriptUA = \mS_n$.
\end{lemma}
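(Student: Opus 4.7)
The plan is to realize $\scriptUA$ as an open subset of $\mU_n$, from which the smooth manifold structure and the characterization of the tangent space follow immediately; then establish connectedness separately via a simple path construction.

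First I would check that $\scriptUA$ is open in $\mU_n$. Recall that $\scriptUA = \{U \in \mU_n \mid H(U) \in \mP_n\}$. The map $\mU_n \to \mH_n$, $U \mapsto H(U) = \tfrac{1}{2}(U + U^*)$, is (the restriction of) a smooth map between smooth manifolds and hence continuous, and $\mP_n$ is open in $\mH_n$. Therefore $\scriptUA$ is the preimage of an open set under a continuous map, so it is open in $\mU_n$. Since $\mU_n$ is a smooth manifold with tangent space $T_U \mU_n = \mS_n$ at each point (under the standard identification used in the paper, cf.\ Section~\ref{sec:background}), any open subset of $\mU_n$ inherits a smooth manifold structure, and at any point the tangent space agrees with that of $\mU_n$; see \cite[Ex.~1.26]{lee2013introduction}. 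This immediately gives $T_U \scriptUA = \mS_n$ for every $U \in \scriptUA$.

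It remains to show $\scriptUA$ is connected, for which it suffices (by \cite[Prop.~1.11]{lee2013introduction}) to show it is path-connected; and for that it is enough to connect any $U \in \scriptUA$ to $I \in \scriptUA$ by a continuous path in $\scriptUA$. Since $U$ is a normal strictly accretive matrix, its phases equal the angles of its eigenvalues and lie in $(-\pi/2, \pi/2)$. Writing a unitary diagonalization $U = V^* D V$ with $D = \diag(e^{i\phi_1}, \ldots, e^{i\phi_n})$ and $\phi_k \in (-\pi/2, \pi/2)$, define
\[
\gamma(t) := V^* \diag\bigl(e^{i(1-t)\phi_1}, \ldots, e^{i(1-t)\phi_n}\bigr) V, \qquad t \in [0,1].
\]
Then $\gamma$ is continuous, $\gamma(0) = U$, $\gamma(1) = I$, and for each $t \in [0,1]$ the matrix $\gamma(t)$ is unitary with eigenvalue phases $(1-t)\phi_k \in (-\pi/2, \pi/2)$, so $\gamma(t) \in \scriptUA$.

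The only mildly delicate point is the openness step: one must be careful that although $\mU_n$ is \emph{not} open in $\gln$, it is a smooth embedded submanifold, and the relevant notion of openness here is openness in the subspace topology of $\mU_n$, which is exactly what the continuity of $U \mapsto H(U)$ provides. Once this is in place, the remainder of the argument is essentially routine.
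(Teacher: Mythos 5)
Your proof is correct and follows essentially the same route as the paper: establish that $\scriptUA$ is open in $\mU_n$ (both arguments ultimately rest on $\mP_n$ being open in $\mH_n$; the paper goes through Lemma~\ref{lem:open_A} and intersects with $\mU_n$, you use continuity of $U \mapsto H(U)$ directly), invoke the open-submanifold fact to get the smooth structure and $T_U\scriptUA = \mS_n$, and prove connectedness by a phase-shrinking path to $I$. Your explicit path $V^*\diag(e^{i(1-t)\phi_k})V$ is just a concrete instance of the path the paper invokes \emph{mutatis mutandis} from Lemma~\ref{lem:connected}, so no substantive difference.
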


\begin{proof}
Since $\mA_n$ is open in $\gln$ (Lemma~\ref{lem:open_A}), $\scriptUA = \mU_n \cap \mA_n$ is open in $\mU_n$ in the relative topology with respect to $\gln$. Thus it is a smooth manifold \cite[Ex.~1.26]{lee2013introduction}. Moreover, the proof of Lemma~\ref{lem:connected} holds, \emph{mutatis mutandis}, showing that it is connected. Finally, since it is open in $\mU_n$, the tangent space at $U \in \scriptUA$ is $T_U\scriptUA = \mS_n$, cf.~\cite[Prob.~8.29]{lee2013introduction}.
\end{proof}

\begin{lemma}[{Cf. \cite[Prop.~VII.2.5]{lang1999fundamentals}}]\label{lem:polar_decomp_smooth}
For $A \in \gln$, let $A = VQ$ where $V \in \mU_n$ and $Q \in \mP_n$ be the polar decomposition of $A$. The mapping $A \mapsto (V,Q)$ is a diffeomorphis between the manifolds $\gln$ and $ \mU_n \times \mP_n$.
\end{lemma}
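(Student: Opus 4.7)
The plan is to verify the three ingredients of a diffeomorphism in turn: that the map $f : A \mapsto (V, Q)$ is well-defined, that it is a bijection, and that both $f$ and its inverse are smooth. The fact that $\gln$ and $\mU_n \times \mP_n$ are smooth manifolds of the same dimension is standard; see, e.g., \cite[Ex.~1.26]{lee2013introduction} for $\gln$ and \cite[Ex.~1.34]{lee2013introduction} combined with the standard structure of $\mU_n$ and $\mP_n$.

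First, I would recall that for any $A \in \gln$ the polar decomposition exists and is unique, with the explicit formulas $Q = (A^*A)^{1/2}$ and $V = AQ^{-1}$; this gives a well-defined map $f : \gln \to \mU_n \times \mP_n$. The inverse map $g : (V, Q) \mapsto VQ$ is clearly a bijection onto its image, since the uniqueness of the polar decomposition shows that $g \circ f = \mathrm{id}_{\gln}$ and $f \circ g = \mathrm{id}_{\mU_n \times \mP_n}$. Thus $f$ is a bijection.

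For smoothness of $g$, the map $(V, Q) \mapsto VQ$ is just matrix multiplication, which is bilinear and hence smooth as a map $\mM_n \times \mM_n \to \mM_n$; restricting to the submanifold $\mU_n \times \mP_n$ preserves smoothness. For smoothness of $f$, I would decompose it as the composition of the following maps, each of which is smooth: the map $A \mapsto A^*A$ is a polynomial in the real and imaginary parts of the entries of $A$; the matrix square root $\mP_n \to \mP_n$, $P \mapsto P^{1/2}$, is smooth by \cite[Thm.~7.2.6]{horn2013matrix} (in fact real-analytic, since on $\mP_n$ it is the restriction of a holomorphic functional calculus on a neighborhood of the positive real axis); matrix inversion $\gln \to \gln$ is smooth; and multiplication is smooth. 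Composing, $A \mapsto Q = (A^*A)^{1/2}$ is smooth, and then $A \mapsto V = AQ^{-1}$ is smooth, so $f$ itself is smooth.

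The only substantive point in this chain is the smoothness of the matrix square root on $\mP_n$, which is the citation that does the real work; everything else is either a standard manifold construction or a direct algebraic manipulation. Thus the main obstacle is only notational: keeping track of which ambient manifold we are in when we restrict to unitary or positive definite submanifolds. Once this is set, the result follows.
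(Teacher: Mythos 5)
Your proposal is correct and takes essentially the same route as the paper: bijectivity from existence and uniqueness of the polar decomposition, smoothness of the inverse from matrix multiplication being polynomial, and smoothness of $A \mapsto (V,Q)$ via the explicit formulas $Q = (A^*A)^{1/2}$, $V = AQ^{-1}$ together with smoothness of the matrix square root on $\mP_n$ (the same citation the paper uses). No gaps to report.
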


\begin{proof}
First, since $\mU_n$ and $\mP_n$ are smooth manifolds so is $\mU_n \times \mP_n$ \cite[Ex.~1.34]{lee2013introduction}. 
Next, for each $A \in \gln$ the polar decomposition is unique \cite[Thm.~7.3.1]{horn2013matrix}, \cite[Prob.~3.2.20]{zhang2011matrix}, and for each pair of matrices $(V, Q) \in \mU_n \times \mP_n$ we have that $VQ \in \gln$; thus the mapping is bijective. Now, $A$ is smooth in $V$ and $Q$ since it is polynomial in the coefficients, i.e., the inverse mapping is smooth. To prove the converse, note that the components in the polar decomposition $A = VQ$ are given by $Q = (A^*A)^{1/2}$ and $V = A(A^*A)^{-1/2}$, cf.~\cite[p.~449]{horn2013matrix}, \cite[p.~288]{zhang2011matrix}. Since $A \in \gln$, $A^*A \in \mP_n$, and the matrix square root is a smooth function on $\mP_n$, cf.~\cite[Thm.~7.2.6]{horn2013matrix}. Therefore, since both $Q$ and $V$ are compositions of smooth functions of $A$, they both depend smoothly on the components of $A$.
\end{proof}
\end{appendices}

%%%%%%%%%%%%%
%References %
%%%%%%%%%%%%%
\bibliographystyle{plain}
\bibliography{refs}

\begin{thebibliography}{10}

\bibitem{absil2008optimization}
Pierre-Antoine Absil, Robert Mahony, and Rodolphe Sepulchre.
\newblock {\em Optimization algorithms on matrix manifolds}.
\newblock Princeton University Press, Princeton, NJ, 2008.

\bibitem{amari2000methods}
Shun-ichi Amari and Hiroshi Nagaoka.
\newblock {\em Methods of information geometry}.
\newblock American Mathematical Society, Providence, RI, 2000.

\bibitem{andruchow2005short}
Esteban Andruchow.
\newblock Short geodesics of unitaries in the {$L_2$} metric.
\newblock {\em Canadian Mathematical Bulletin}, 48(3):340--354, 2005.

\bibitem{andruchow2010finsler}
Esteban Andruchow, Gabriel Larotonda, and L{\'a}zaro~A. Recht.
\newblock Finsler geometry and actions of the $p$-{S}chatten unitary groups.
\newblock {\em Transactions of the American Mathematical Society},
  362(1):319--344, 2010.

\bibitem{andruchow2008geometry}
Esteban Andruchow and L{\'a}zaro~A. Recht.
\newblock Geometry of unitaries in a finite algebra: variation formulas and
  convexity.
\newblock {\em International Journal of Mathematics}, 19(10):1223--1246, 2008.

\bibitem{antezana2020minimal}
Jorge Antezana, Eduardo Ghiglioni, and Demetrio Stojanoff.
\newblock Minimal curves in {$\mathcal{U}(n)$} and {$\mathcal{G}l(n)^+$} with
  respect to the spectral and the trace norms.
\newblock {\em Journal of Mathematical Analysis and Applications},
  483(2):123632, 2020.

\bibitem{antezana2014optimal}
Jorge Antezana, Gabriel Larotonda, and Alejandro Varela.
\newblock Optimal paths for symmetric actions in the unitary group.
\newblock {\em Communications in Mathematical Physics}, 328(2):481--497, 2014.

\bibitem{ballantine1975accretive}
Charles~S. Ballantine and Charles~R. Johnson.
\newblock Accretive matrix products.
\newblock {\em Linear and Multilinear Algebra}, 3(3):169--185, 1975.

\bibitem{bao2000introduction}
David Bao, Shiing-Shen Chern, and Zhongmin Shen.
\newblock {\em An introduction to {R}iemann-{F}insler geometry}.
\newblock Springer, New York, NY, 2000.

\bibitem{bhatia1997matrix}
Rajendra Bhatia.
\newblock {\em Matrix analysis}.
\newblock Springer, New York, NY, 1997.

\bibitem{bhatia2007positive}
Rajendra Bhatia.
\newblock {\em Positive definite matrices}.
\newblock Princeton university press, Princton, NJ, 2007.

\bibitem{chen2019phase}
Wei Chen, Dan Wang, Sei~Zhen Khong, and Li~Qiu.
\newblock Phase analysis of {MIMO LTI} systems.
\newblock In {\em 2019 IEEE 58th Conference on Decision and Control (CDC)},
  pages 6062--6067. IEEE, 2019.

\bibitem{cheng2012finsler}
Xinyue Cheng and Zhongmin Shen.
\newblock {\em Finsler geometry}.
\newblock Springer, Berlin, Heidelberg, 2012.

\bibitem{chern1996finsler}
Shiing-Shen Chern.
\newblock Finsler geometry is just {R}iemannian geometry without the quadratic
  restriction.
\newblock {\em Notices of the American Mathematical Society}, 43(9):959--963,
  1996.

\bibitem{deprima1974range}
Charles~R. DePrima and Charles~R. Johnson.
\newblock The range of {$A^{-1}A^{*}$} in {GL(n, C)}.
\newblock {\em Linear Algebra and its Applications}, 9:209--222, 1974.

\bibitem{drury2015principal}
Stephen Drury.
\newblock Principal powers of matrices with positive definite real part.
\newblock {\em Linear and Multilinear Algebra}, 63(2):296--301, 2015.

\bibitem{drury2014singular}
Stephen Drury and Minghua Lin.
\newblock Singular value inequalities for matrices with numerical ranges in a
  sector.
\newblock {\em Operators and Matrices}, 8(4):1143--1148, 2014.

\bibitem{fan1951maximum}
Ky~Fan.
\newblock Maximum properties and inequalities for the eigenvalues of completely
  continuous operators.
\newblock {\em Proceedings of the National Academy of Sciences of the United
  States of America}, 37(11):760, 1951.

\bibitem{furtado2001spectral}
Susana Furtado and Charles~R. Johnson.
\newblock Spectral variation under congruence.
\newblock {\em Linear and Multilinear Algebra}, 49(3):243--259, 2001.

\bibitem{gustafson1997numerical}
Karl~E. Gustafson and Duggirala~K.M. Rao.
\newblock {\em Numerical range}.
\newblock Springer, New York, NY, 1997.

\bibitem{horn1959eigenvalues}
Alfred Horn and Robert Steinberg.
\newblock Eigenvalues of the unitary part of a matrix.
\newblock {\em Pacific Journal of Mathematics}, 9(2):541--550, 1959.

\bibitem{horn1994topics}
Roger~A. Horn and Charles~R. Johnson.
\newblock {\em Topics in matrix analysis}.
\newblock Cambridge University Press, New York, NY, 1994.

\bibitem{horn2013matrix}
Roger~A. Horn and Charles~R. Johnson.
\newblock {\em Matrix analysis}.
\newblock Cambridge university press, New York, NY, 2013.

\bibitem{horn2006canonical}
Roger~A. Horn and Vladimir~V. Sergeichuk.
\newblock Canonical forms for complex matrix congruence and $^*$congruence.
\newblock {\em Linear algebra and its applications}, 416(2-3):1010--1032, 2006.

\bibitem{jeuris2012survey}
Ben Jeuris, Raf Vandebril, and Bart Vandereycken.
\newblock A survey and comparison of contemporary algorithms for computing the
  matrix geometric mean.
\newblock {\em Electronic Transactions on Numerical Analysis}, 39:379--402,
  2012.

\bibitem{johnson2001generalization}
Charles~R. Johnson and Susana Furtado.
\newblock A generalization of {S}ylvester's law of inertia.
\newblock {\em Linear Algebra and its Applications}, 338(1-3):287--290, 2001.

\bibitem{jost2008riemannian}
J{\"u}rgen Jost.
\newblock {\em Riemannian geometry and geometric analysis}.
\newblock Springer, Berlin, Heidelberg, 2008.

\bibitem{kato1995perturbation}
Tosio Kato.
\newblock {\em Perturbation theory for linear operators}.
\newblock Springer, Berlin, Heidelberg, 1995.

\bibitem{lang1999fundamentals}
Serge Lang.
\newblock {\em Fundamentals of differential geometry}.
\newblock Springer, New York, NY, 1999.

\bibitem{lee2013introduction}
John~M. Lee.
\newblock {\em Introduction to Smooth Manifolds}.
\newblock Springer, New York, NY, 2013.

\bibitem{lee2018introduction}
John~M. Lee.
\newblock {\em Introduction to {R}iemannian Manifolds}.
\newblock Springer, Cham, 2018.

\bibitem{lewis1996group}
Adrian~S. Lewis.
\newblock Group invariance and convex matrix analysis.
\newblock {\em SIAM Journal on Matrix Analysis and Applications},
  17(4):927--949, 1996.

\bibitem{li2014determinantal}
Chi-Kwong Li and Nung-Sing Sze.
\newblock Determinantal and eigenvalue inequalities for matrices with numerical
  ranges in a sector.
\newblock {\em Journal of Mathematical Analysis and Applications},
  410(1):487--491, 2014.

\bibitem{marshall2011inequalities}
Albert~W Marshall, Ingram Olkin, and Barry~C Arnold.
\newblock {\em Inequalities: theory of majorization and its applications}.
\newblock Springer, New York, NY, 2nd edition, 2011.

\bibitem{mathias1992matrices}
Roy Mathias.
\newblock Matrices with positive definite {H}ermitian part: Inequalities and
  linear systems.
\newblock {\em SIAM journal on matrix analysis and applications},
  13(2):640--654, 1992.

\bibitem{mirsky1960symmetric}
Leon Mirsky.
\newblock Symmetric gauge functions and unitarily invariant norms.
\newblock {\em The quarterly journal of mathematics}, 11(1):50--59, 1960.

\bibitem{moakher2005differential}
Maher Moakher.
\newblock A differential geometric approach to the geometric mean of symmetric
  positive-definite matrices.
\newblock {\em SIAM Journal on Matrix Analysis and Applications},
  26(3):735--747, 2005.

\bibitem{murray1994mathematical}
Richard~M. Murray, Zexiang Li, and S.~Shankar Sastry.
\newblock {\em A mathematical introduction to robotic manipulation}.
\newblock CRC press, Boca Raton, FL, 1994.

\bibitem{okada1982minkowskian}
Tsutomu Okada.
\newblock Minkowskian product of {F}insler spaces and {B}erwald connection.
\newblock {\em Journal of Mathematics of Kyoto University}, 22(2):323--332,
  1982.

\bibitem{oneill1983semi}
Barrett O'Neill.
\newblock {\em Semi-{R}iemannian geometry with applications to relativity}.
\newblock Academic press, San Diego, CA, 1983.

\bibitem{astrom2008feedback}
Karl~Johan \r{A}str\"{o}m and Richard~M. Murray.
\newblock {\em Feedback systems}.
\newblock Princeton university press, Princeton, NJ, 2008.

\bibitem{shimada2000finsler}
Hideo Shimada and Vasile~Soriin Sab\u{a}u.
\newblock Finsler geometry.
\newblock In P.L. Antonelli, editor, {\em Finslerian Geometries: A Meeting of
  Minds}, pages 15--24. Springer, Dordrecht, 2000.

\bibitem{thompson1974eigenvalues}
Robert~C. Thompson.
\newblock On the eigenvalues of a product of unitary matrices {I}.
\newblock {\em Linear and Multilinear Algebra}, 2(1):13--24, 1974.

\bibitem{wang2019phases}
Dan Wang, Wei Chen, Sei~Zhen Khong, and Li~Qiu.
\newblock On the phases of a complex matrix.
\newblock {\em Linear Algebra and its Applications}, 593:152--179, 2020.

\bibitem{zhang2011matrix}
Fuzhen Zhang.
\newblock {\em Matrix theory: basic results and techniques}.
\newblock Springer, New York, NY, 2011.

\bibitem{zhang2015matrix}
Fuzhen Zhang.
\newblock A matrix decomposition and its applications.
\newblock {\em Linear and Multilinear Algebra}, 63(10):2033--2042, 2015.

\bibitem{zhao2020low}
Di~Zhao, Axel Ringh, Li~Qiu, and Sei~Zhen Khong.
\newblock Low phase-rank approximation.
\newblock {\em Submitted}, 2020.

\end{thebibliography}

%%%%%%%%%%%%%%%%%%
%End of document %
%%%%%%%%%%%%%%%%%%
\end{document}